\documentclass{amsart}

\usepackage{amsmath}
\usepackage{amsthm}
\usepackage{amssymb}
\usepackage{comment}
\usepackage{ulem}
\usepackage{tikz}
\usetikzlibrary{arrows,decorations.pathmorphing,backgrounds,positioning,fit,petri,patterns}

\usepackage{here}

\usepackage{enumitem}

\newcounter{nombre}

\newcommand{\Crml}{C_{1}}
\newcommand{\Cde}{C_{2}}
\newcommand{\Crnm}{C_{3}}
\newcommand{\CA}{C_{4}}
\newcommand{\Cf}{C_{5}}
\newcommand{\Cg}{C_{6}}
\newcommand{\CsomeA}{C_{7}}
\newcommand{\CsomeB}{C_{8}}
\newcommand{\CsomeC}{C_{9}}

\usepackage[nobysame,alphabetic]{amsrefs}
\usepackage{graphicx}
\usepackage{stmaryrd}
\usepackage[all]{xy}
\usepackage[vcentermath]{youngtab}
\usepackage{slashbox}

\allowdisplaybreaks[4]

\newtheorem{defi}{Definition}
\newtheorem{defi-prop}[defi]{Definition-Proposition}
\newtheorem{lemm}[defi]{Lemma}
\newtheorem{prop}[defi]{Proposition}
\newtheorem{theo}[defi]{Theorem}

\newcommand{\kks}[2][k]{g^{(#1)}_{#2}}
\newcommand{\kf}[1]{\kks[4]{#1}}

\newcommand{\Pk}{\mathcal{P}_{k}}
\newcommand{\Cn}{\mathcal{C}_{k+1}}

\newcommand{\la}{\lambda}

\newcommand{\ka}{\kappa}
\newcommand{\ga}{\gamma}
\newcommand{\de}{\delta}
\newcommand{\DE}[1]{\de\left[#1\right]}
\newcommand{\DEt}[1]{\de\left[\text{#1}\right]}

\newcommand{\Z}{\mathbb{Z}}

\newcommand{\Rbl}{R'_{\bl}}
\newcommand{\lra}{\longrightarrow}

\newcommand{\Lra}{\Longrightarrow}
\newcommand{\Lla}{\Longleftarrow}

\newcommand{\bdd}{\mathfrak{p}}
\newcommand{\core}{\mathfrak{c}}

\newcommand{\ci}{^\circ}
\newcommand{\sm}{\setminus}
\newcommand{\kconj}[1]{{#1}^{\omega_k}}
\newcommand{\ti}{\tilde}
\newcommand{\wti}{\widetilde}
\newcommand{\hook}[2]{\mathrm{hook}_{#1}(#2)}

\newcommand{\res}{\mathrm{res}}

\newcommand{\syou}[1]{\scalebox{0.45}{\yng(#1)}}

\newcommand{\kl}[1][k]{\kks[#1]{\la}}
\newcommand{\krt}[1][k]{\kks[#1]{R_t}}

\newcommand{\RRna}{R_{t_1}^{a_1}\cup\dots\cup R_{t_n}^{a_n}}

\newcommand{\RRma}{R_{t_1}^{a_1}\cup\dots\cup R_{t_m}^{a_m}}
\newcommand{\DS}{\displaystyle}
\renewcommand{\emptyset}{\varnothing}
\newcommand{\bla}{{\bar\la}}
\newcommand{\bl}{{\bar{l}}}
\newcommand{\Nla}{$\mathsf{(N\la)}$}

% \font\minhati=min10 at 1.5mm
% \font\cmrhati=cmr10 at 1.5mm
% \def\TINY{\minhati\cmrhati}

% title
\title{Factorization formulas of $K$-$k$-Schur functions II}
\author{Motoki TAKIGIKU}
\date{\today}

\begin{document}

\maketitle

\begin{abstract}
	Subsequently to the author's preceding paper,
	we give full proofs of some explicit formulas about factorizations of 
	$K$-$k$-Schur
	functions
	associated with any multiple $k$-rectangles.
\end{abstract}

\tableofcontents

\section{Introduction}

This paper is a sequel to the author's preceding paper \cite{Takigiku_part1}.
In \cite{Takigiku_part1}, 
we investigated some factorization properties of a certain family of symmetric functions called \textit{$K$-$k$-Schur functions} $\kl$ from the combinatorial viewpoint.
See \cite{Takigiku_part1} and its references for the backgrounds of these functions and detailed definitions.
In this paper we give a proof of a fundamental formula stated in \cite{Takigiku_part1}:
\[
	\kks{R_t\cup R_t} = \kks{R_t} \sum_{\la\subset R_t} \kl,
\]
where $R_t$ ($1\le t\le k$) stands for the partition $(t^{k+1-t})=(t,t,\dots,t)$,
and $\mu\cup\nu$ stands for the partition obtained by reordering
$(\mu_1,\dots,\mu_{l(\mu)}, \nu_1, \dots, \nu_{l(\nu)})$ in the weakly decreasing order for any partitions $\mu, \nu$ .

Let $k$ be a positive integer.
T.\,Ikeda suggested that $\kks{R_t\cup\la}$ is divisible by $\krt$
and raised a question what the quotient $\kks{R_t\cup\la}\big/\krt$ is.
%and that there should be some interesting properties 
We have shown that,
for any $k$-bounded partition $\la$ and any union of $k$-rectangles $P=\RRma$ ($1\le t_1 < \dots < t_m \le k$ and $a_1,\dots,a_m>0$)
with $R_t^a = \underbrace{R_t\cup\dots\cup R_t}_{a}$,
$\kks{P\cup\la}$ is divisible by $\kks{\RRma}$.
More precisely, we can write
\begin{equation}\label{eq:divisible}
	\kks{P\cup\la} = \kks{P} \left(\kl + \sum_{|\mu| < |\la|} a_{P,\la,\mu}\kks{\mu}\right)
\end{equation}
for some coefficients $a_{P,\la,\mu}$ \cite[Corollary 15]{Takigiku_part1}.

We have given explicit formulas of the coefficients $a_{P,\la,\mu}$ for some cases.
Moreover, we have shown the following factorization formulas of $\kks{P}$ (\cite[Theorem 31]{Takigiku_part1} and (13) in its proof):
\begin{eqnarray}
	\kks{R_{t_1}^{a_1}\cup\cdots\cup R_{t_m}^{a_m}} 
		= \kks{R_{t_1}^{a_1}}\cdots\kks{R_{t_m}^{a_m}}, \label{eq:dist_split} \\
	\kks{R_t^a} = \kks{R_t} \left(\frac{\kks{R_{t}\cup R_{t} }}{\kks{R_{t}}}\right)^{a-1}. \notag
\end{eqnarray}

This paper is devoted to the proof of
\begin{equation}
	\frac{\kks{R_t\cup R_t}}{\kks{R_t}} = \sum_{\la\subset R_t} \kl \label{eq:same}
\end{equation}
((\ref{eq:samek_coro}) in Theorem \ref{samek_goal}).
Note that, (\ref{eq:dist_split}) rewritten as 
$\kks{R_{t_1}^{a_1}\cup\cdots\cup R_{t_m}^{a_m}}
 = \kks{R_{t_1}^{a_1}\cup\dots\cup R_{t_{m-1}}^{a_{m-1}}}\kks{R_{t_m}^{a_m}}$
and this formula (\ref{eq:same}) can be seen as special cases of (\ref{eq:divisible}),
as (\ref{eq:dist_split}) is a case without any ``smaller terms'' and 
(\ref{eq:same}) is a case with every ``smaller terms''.
As a result, we have the formula
\[
	\kks{\RRma} = \kks{R_{t_1}} \left(\sum_{\la^{(1)}\subset R_{t_1}} \kks{\la^{(1)}}\right)^{a_1-1}
	\dots
	\kks{R_{t_m}} \left(\sum_{\la^{(m)}\subset R_{t_m}} \kks{\la^{(m)}}\right)^{a_m-1}.
\]

\noindent{\bf Acknowledgement. }
The author would like to express his gratitude to
T.\ Ikeda
for suggesting the problem to the
author and helping him with many fruitful discussions.
He is grateful to
H.\ Hosaka and I.\ Terada
for many valuable comments and pointing out
mistakes and typos in the draft version of this paper.
He is also grateful to the committee of
the 29th international conference on
Formal Power Series and Algebraic Combinatorics (FPSAC)
for many valuable comments for the extended abstract version of this paper.
This work was supported by
the Program for Leading Graduate
Schools, MEXT, Japan.
The contents of this paper is the second half of the author's master-thesis \cite{MasterThesis}.

%%%%%%%%%%%%%%%%%%%%%%%%%%%%%%%%%%%%%%%%%%%%%%%%%%%%%%%%%%%

% \input{prel_part2.tex}
\section{Preliminaries}\label{sec:prel_part2}

In this paper we use the notations that appeared in the author's preceding paper.
See \cite{Takigiku_part1} for details.

\vspace{2mm}
Here we review some important notations.

Let $\Pk$ denote the set of all \textit{$k$-bounded partitions}, which are partitions whose parts are all bounded by $k$.
Let $\Cn$ denote the set of all \textit{$(k+1)$-cores}, which are partitions none of whose cells have a hook length equal to $k+1$.

The bijection
% \[
	$\bdd \colon \Cn \lra \Pk ; \ka \mapsto \la$
% \]
is defined by
$\la_i=\#\{j\mid (i,j)\in\ka,\ \hook{(i,j)}{\ka}\le k\}$,
and its inverse map is denoted by
% \[
	$\core \colon \Pk \lra \Cn ; \la \mapsto \ka $.
% \]
% is given by $\core = \bdd^{-1}$.

We denote by $R_t$ the partition 
$(t^{k+1-t})=(t,t,\dots,t) \in \Pk$ for $1\le t \le k$,
which is called a {\it $k$-rectangle}.

We sometimes abbriviate removable (resp.\ addable) corner of $\la$ with residue $i$ to $\la$-removable (resp.\ $\la$-addable) $i$-corner.
In order to avoid making equations too wide, 
we may denote removable corner, addable corner, horizontal strip, vertical strip and weak strip briefly by rem.cor., add.cor., h.s., v.s., and w.s.

For a cell $c = (i, j)$, the \textit{residue} of $c$ is
$\res(c) = j - i \mod (k+1) \in \Z/(k+1)$.

For a partition $\la$, $(i,j)\in(\Z_{>0})^2$ is called {\it $\la$-blocked} if $(i+1,j)\in \la$.

For partitions $\la,\mu$, %we put
we denote by $r_{\la\mu}$ the number of distinct residues of $\la$-nonblocked $\mu$-removable corners.

We have employed the following ``rewritten version'' of Morse's Pieri rule for $K$-$k$-Schur functions as its definition.
Let $h_r = \sum_{i_1\le i_2\le\dots\le i_r} x_{i_1}\dots x_{i_r}$ ($r\in\Z_{>0}$) be the complete symmetric functions.

\begin{prop}\label{Pieri}
For $\la \in \Pk$ and $0 \le r \le k$,
\begin{equation}\label{eq:Pieri}
h_r \cdot \kks{\la} = \sum_{s=0}^{r} (-1)^{r-s} 
\sum_{\substack{\mu \\ \core(\mu)/\core(\la):\text{weak $s$-strip}}} \binom{r_{\core(\mu)\core(\la)}}{r-s} \kks{\mu}. %\label{Pieri_eq}
\end{equation}
\end{prop}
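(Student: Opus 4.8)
The plan is to derive \eqref{eq:Pieri} from Morse's original Pieri rule by passing to a generating-function form and then extracting the coefficient of $h_r$. Collect the complete homogeneous functions into the series $H(z)=\sum_{r\ge 0}h_r z^r$, so that $h_r$ is the coefficient of $z^r$ in $H(z)$, and regard multiplication by $H(z)$ as a single operator on $\kks{\la}$. Since \eqref{eq:Pieri} must hold for every $r$ at once, it is equivalent to the closed identity
\begin{equation}\label{eq:Pieri_gf}
	H(z)\cdot\kks{\la}=\sum_{\substack{\mu \\ \core(\mu)/\core(\la):\ \text{weak $s$-strip}}} z^{s}\,(1-z)^{r_{\core(\mu)\core(\la)}}\,\kks{\mu},
\end{equation}
where $s$ denotes the size of the weak strip $\core(\mu)/\core(\la)$. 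Indeed, extracting the coefficient of $z^r$ from both sides and invoking the elementary expansion $z^{s}(1-z)^{N}=\sum_{r\ge s}(-1)^{r-s}\binom{N}{r-s}z^{r}$ with $N=r_{\core(\mu)\core(\la)}$ reproduces exactly the right-hand side of \eqref{eq:Pieri}: the outer sum over $s$ from $0$ to $r$ appears because the weak-strip factor $z^{s}$ forces $s\le r$, and the binomial vanishes automatically for $r-s>N$. So it suffices to prove \eqref{eq:Pieri_gf}.

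First I would recall Morse's Pieri rule in its published form and recast it as \eqref{eq:Pieri_gf}, treating this as a dictionary-matching problem. Morse's rule expresses the products $h_r\cdot\kks{\la}$ as signed sums of $\kks{\mu}$ indexed by weak strips $\core(\mu)/\core(\la)$; the task is to match Morse's index set with the $\mu$ for which $\core(\mu)/\core(\la)$ is a weak $s$-strip, and to match Morse's coefficient with the single factor $z^{s}(1-z)^{r_{\core(\mu)\core(\la)}}$. The heart of the matter is to identify the exponent of $(1-z)$ with $r_{\core(\mu)\core(\la)}$, the number of distinct residues of $\core(\la)$-nonblocked $\core(\mu)$-removable corners: each such residue should contribute an independent factor $(1-z)$, where the $-z$ encodes the $K$-theoretic correction distinguishing this rule from the ordinary (cohomological) $k$-Schur Pieri rule, whose sole term is the leading $z^{s}$ recovered here at $r=s$ with coefficient $\binom{r_{\core(\mu)\core(\la)}}{0}=1$.

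The hard part will be establishing \eqref{eq:Pieri_gf} itself, i.e.\ verifying that the $(1-z)$-exponent is governed precisely by the nonblocked-removable-residue count $r_{\core(\mu)\core(\la)}$ and not by some superficially similar quantity such as the number of addable corners or of connected components of the strip. This forces a careful analysis of the weak-strip combinatorics and, in particular, of the blocked/nonblocked distinction, which is exactly what controls the multiplicities. I would anchor the identification in the extremal cases first — the leading term $r=s$, where the coefficient is $1$ in agreement with the ordinary Pieri rule, and the single-box case $r=1$, where \eqref{eq:Pieri} reduces to multiplication by $h_1$ and the residue count is transparent — and then push to general $r$ either by inducting on $r$ via associativity of multiplication together with the $h_1$-rule, or by a direct inclusion–exclusion over the set of nonblocked removable residues. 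Finally I would note that $\kks{\emptyset}=1$ together with the $r=1$ instance of \eqref{eq:Pieri} determines all $\kks{\mu}$ recursively, so the rewritten rule is consistent and the functions it defines coincide with Morse's $K$-$k$-Schur functions.
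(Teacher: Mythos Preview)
The paper does not supply a proof of this proposition. It appears in the Preliminaries section as the adopted \emph{definition} of $K$-$k$-Schur functions, explicitly described as a ``rewritten version'' of Morse's Pieri rule and carried over from the author's preceding paper; no argument is given here. So there is no paper proof to compare your proposal against.

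As for your proposal itself: the generating-function repackaging is sound --- the equivalence between the stated Pieri rule and your series identity $H(z)\cdot\kks{\la}=\sum_{\mu} z^{s}(1-z)^{r_{\core(\mu)\core(\la)}}\kks{\mu}$ is indeed just the binomial expansion you describe. But you have not actually proved that series identity. You correctly flag that the substance lies in matching the exponent of $(1-z)$ to $r_{\core(\mu)\core(\la)}$, and then defer this to a program (``anchor in extremal cases,'' ``push to general $r$ either by inducting \dots\ or by inclusion--exclusion'') without executing either branch. To complete the argument you would need to write down Morse's rule in its published form, exhibit the bijection or sign-reversing involution that converts her coefficients into $(-1)^{r-s}\binom{r_{\core(\mu)\core(\la)}}{r-s}$, and verify that the nonblocked-removable-residue count is exactly the right exponent. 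As written, this is a plan rather than a proof.
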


\vspace{2mm}
\noindent{\it Example.}
{\small
	Consider the case $\la=(a,b)$ with $k \ge a \ge b$.
	Let us expand $\kks{(a,b)}$ into a linear combination of products of complete symmetric functions and $K$-$k$-Schur functions labeled by partitions with fewer rows.

By using the Pieri rule (\ref{eq:Pieri}) we have
\begin{align*}
	\kks{(a)} h_i &= \left(\kks{(a,i)} - \kks{(a,i-1)}\right) \\
				  &+ \left(\kks{(a+1,i-1)} - \kks{(a+1,i-2)}\right) \\
				  &+ \dots \\
				  &  \begin{cases}
						\begin{minipage}{7cm}
							$\dots + 
							\left(\kks{(a+i-1,1)} - \kks{(a+i-1,0)}\right)$ \\
							$\qquad +\kks{(a+i,0)} $
						\end{minipage}
						& \text{(if $a+i\le k$)} \\
						\begin{minipage}{7cm}
							$\dots + 
							\left(\kks{(k-1,a+i-k+1)} - \kks{(k-1,a+i-k)}\right)$ \\
							$\qquad +\left(\kks{(k,a+i-k)} - \kks{(k,a+i-k-1)}\right)$
						\end{minipage}
						& \text{(if $a+i >k$)}
					\end{cases}
\end{align*}
for $i\le a$, and summing this over $0 \le i \le b$, we have 
\begin{align}
\kks{(a)} \left( h_b+\dots+h_0 \right) 
&= \kks{(a,b)} + \kks{(a+1,b-1)} + \cdots 
 \begin{cases}
	 \kks{(a+b,0)} & \text{(if $a+b\le k$)} \\
	 \kks{(k,a+b-k)} & \text{(if $a+b\ge k$)}
 \end{cases} \label{eq:ab_sum}\\
&= \sum_{\substack{\mu/(a)\text{:horizontal strip} \\ |\mu|=a+b \\ \mu_1\le k}} \kks{\mu}. \notag
\end{align}
Similarly we have 
$$ \kks{(a+1)} \left( h_{b-1}+\dots+h_0 \right) = \kks{(a+1,b-1)} + \kks{(a+2,b-2)} + \dots = \sum_{\substack{\mu/(a+1)\text{:horizontal strip} \\ |\mu|=a+b \\ \mu_1\le k}} \kks{\mu},$$
hence 
\begin{equation}\label{eq:Pieri_example}
	\kks{(a,b)} = \kks{(a)}\left(h_b+\dots+h_0\right) - \kks{(a+1)}\left(h_{b-1}+\dots+h_0\right).
\end{equation}
}

We employ the following notation again which was often used in the preceding paper.

\begin{itemize}
	\item [\Nla]\label{nota:Nla}
		Let $(\emptyset\neq)\la\in\Pk$ satisfying $\bla\subset\Rbl$,
		where we write
		$\bla=(\la_1,\la_2,\dots,\la_{l(\la)-1})$ and
		$\bl = l(\bla) = l(\la) - 1$.
		(Here we consider $R_{t}$ to be $\emptyset$ unless $1\le t\le k$)
\end{itemize}
		({\it Note}:
		when $l(\la)=1$, we have $\bl = 0$ and $\bla = \emptyset = \Rbl$ thus $\la$ satisfies \Nla.
		When $l(\la) > k+1$, we have $\bl > k$ and $\bla \neq \emptyset = \Rbl$ thus $\la$ does not satisfy \Nla.
		)

\vspace{2mm}
The following simple lemma is needed later.
Throughout this paper, for a condition $P$ we write $\DE{P}=1$ if $P$ is true and $\DE{P}=0$ if $P$ is false.
		
\begin{lemm}\label{binom_fold}
	For $q,a,b\in\mathbb{Z}$, we have
\[
	\sum_{x=0}^{\min(a,b)} (-1)^x \binom{q-\de\left[x=b\right]}{a-x}
	= \de\left[a,b \ge 0\right] \binom{q-1}{a}.
\]
\end{lemm}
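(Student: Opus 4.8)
The plan is to prove Lemma \ref{binom_fold} by a direct combinatorial/generating-function argument, splitting off the single term where the Kronecker delta inside the binomial is active. Write $S = \sum_{x=0}^{\min(a,b)} (-1)^x \binom{q-\de[x=b]}{a-x}$. If $b < 0$ the sum is empty and equals $0$, matching the right-hand side; so assume $b \ge 0$. Then separate the term $x = b$ (which actually occurs only when $b \le \min(a,b)$, i.e. when $a \ge b$) from the rest:
\[
	S = \sum_{x=0}^{\min(a,b)} (-1)^x \binom{q}{a-x}
	  \;+\; \de[a \ge b]\,(-1)^b\left(\binom{q-1}{a-b} - \binom{q}{a-b}\right).
\]
Using Pascal's rule $\binom{q-1}{a-b} - \binom{q}{a-b} = -\binom{q-1}{a-b-1}$, the correction term becomes $-\de[a\ge b](-1)^b\binom{q-1}{a-b-1}$.

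Next I would evaluate the main sum $T := \sum_{x=0}^{\min(a,b)} (-1)^x \binom{q}{a-x}$. The standard hockey-stick-type identity $\sum_{x=0}^{m} (-1)^x \binom{q}{a-x} = (-1)^m \binom{q-1}{a-m}$ (valid for $m \ge 0$, provable by induction on $m$ via Pascal's rule, or by noting $\binom{q}{a-x} = \binom{q-1}{a-x} + \binom{q-1}{a-x-1}$ so the sum telescopes) gives, with $m = \min(a,b)$,
\[
	T = (-1)^{\min(a,b)} \binom{q-1}{a - \min(a,b)}.
\]
Here one must be slightly careful if $a < 0$: then $\min(a,b) < 0$ as well (since we are in the case $b\ge 0$, actually $\min(a,b)=a<0$), the sum is empty, $T = 0$, and again the right-hand side $\de[a,b\ge0]\binom{q-1}{a}$ vanishes, so the identity holds. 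So assume henceforth $a \ge 0$ and $b \ge 0$.

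Now split on whether $a \ge b$ or $a < b$. If $a \ge b$, then $\min(a,b) = b$, so $T = (-1)^b \binom{q-1}{a-b}$, and adding the correction term $-(-1)^b\binom{q-1}{a-b-1}$ gives $S = (-1)^b\left(\binom{q-1}{a-b} - \binom{q-1}{a-b-1}\right)$ — wait, that is not obviously $\binom{q-1}{a}$, so instead I would organize the telescoping differently: rather than invoking the closed form for $T$ and then correcting, fold the $\de[x=b]$ term back in from the start and run the telescoping sum $\sum_{x=0}^{\min(a,b)}(-1)^x\binom{q-\de[x=b]}{a-x}$ directly. The cleanest route is: in the case $a \ge b$, the summation range is $0 \le x \le b$, the term $x=b$ is the last one and carries $q-1$ upstairs while all others carry $q$; writing each $\binom{q}{a-x} = \binom{q-1}{a-x}+\binom{q-1}{a-x-1}$ for $x < b$ and $\binom{q-1}{a-b}$ as itself, the whole sum telescopes to $\binom{q-1}{a}$, as desired. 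In the case $a < b$, the range is $0 \le x \le a$, the term $x = b$ never appears, so $S = T = (-1)^a\binom{q-1}{0} = (-1)^a$; but the right-hand side is $\binom{q-1}{a}$, and these agree only if... they do not in general, which signals that I should double-check: in fact when $a<b$ and $a\ge0$ the claimed value is $\binom{q-1}{a}$, whereas the telescoped main sum with no correction gives $(-1)^a\binom{q-1}{0}=(-1)^a$ — so the resolution must be that the telescoping of $\sum_{x=0}^{a}(-1)^x\binom{q}{a-x}$ actually yields $\binom{q-1}{a}$ (not $(-1)^a\binom{q-1}{a-a}$): indeed $\sum_{x=0}^{a}(-1)^x\binom{q}{a-x} = \sum_{j=0}^{a}(-1)^{a-j}\binom{q}{j}$ which by the alternating-sum identity $\sum_{j=0}^{a}(-1)^{j}\binom{q}{j} = (-1)^a\binom{q-1}{a}$ equals $\binom{q-1}{a}$.

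So the genuine key computation is the alternating partial-sum identity $\sum_{j=0}^{a}(-1)^j\binom{q}{j} = (-1)^a\binom{q-1}{a}$, which I would prove by induction on $a$ using Pascal's rule $\binom{q}{j} = \binom{q-1}{j} + \binom{q-1}{j-1}$, making the left side telescope. Reindexing $j = a - x$ then handles the $a < b$ case directly, and for the $a \ge b$ case I would first absorb the $\de[x=b]$ correction as above and reduce to the same identity. The main obstacle — really the only subtlety — is bookkeeping the boundary cases ($a<0$, $b<0$, $a=b$, the interplay of $\min(a,b)$ with the location of the special term $x=b$) so that every branch matches $\de[a,b\ge0]\binom{q-1}{a}$; the algebra itself is a one-line telescoping once Pascal's rule is applied in the right direction.
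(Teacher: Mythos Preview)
Your final argument is correct and is essentially the same as the paper's: both reduce the sum to a telescoping via Pascal's rule $\binom{q}{j}=\binom{q-1}{j}+\binom{q-1}{j-1}$ (the paper phrases it as $\binom{x+1}{y+1}-\binom{x}{y}=\binom{x}{y+1}$), with the only case split being $a<b$ versus $a\ge b$ to handle where the $\delta[x=b]$ term lands. The paper's proof is two lines; yours arrives at the same telescoping after some detours.

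One caution: the ``hockey-stick-type identity'' you state midway, $\sum_{x=0}^{m}(-1)^x\binom{q}{a-x}=(-1)^m\binom{q-1}{a-m}$, is false as written (take $m=0$: the left side is $\binom{q}{a}$, the right side $\binom{q-1}{a}$). You notice the mismatch yourself and abandon it in favor of the correct identity $\sum_{j=0}^{a}(-1)^j\binom{q}{j}=(-1)^a\binom{q-1}{a}$ and the direct telescoping in the $a\ge b$ case, so the error does not survive into your actual argument; but in a written-up version you should simply delete that false intermediate claim rather than leave it in as a feint.
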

\begin{proof}
	Use
	$\displaystyle\binom{x+1}{y+1} - \binom{x}{y} = \binom{x}{y+1}$
	repeatedly.
	Note that in the case where $a<b$ we use $\displaystyle\binom{q}{a-a}=\binom{q-1}{a-a}$.
\end{proof}

\section{A factorization of $\kks{R_t^a}$}\label{sec:same}
%\section{The factorization of the $K$-$k$-Schur function labeled by a union of the same $k$-rectangles}\label{sec:same}
%\section{Proof of the other half of result 1: the factorization of the $K$-$k$-Schur function labeled by a union of same $k$-rectangles}\label{sec:same}

\subsection{Statements and examples}

In this section we would like to prove 
\[
	\kks{R_t\cup R_t} = \kks{R_t} \sum_{\nu\subset R_t} \kks{\nu}
\]
((\ref{eq:samek_coro}) in Theorem \ref{samek_goal}).
Let us illustrate the situation with some example again.

\vspace{2mm}
\noindent {\it Example.}
{\small
The case where $t=k$ is already proved in \cite[Theorem 23]{Takigiku_part1}.
 % Theorem \ref{R_t_r}.

Next consider the case where $t=k-1$.
Let us do the calculation of $\kks{R_{k-1}\cup R_{k-1}}$ explicitly when $k=4$.
Then $R_{k-1} = R_3 = \syou{3,3}$. We have
\begin{align*}
\kf{R_3} = \kf{\syou{3,3}}
&= \kf{\syou{3}} \left( \kf{\syou{3}} + \kf{\syou{2}} + \kf{\syou{1}} + \kf{\emptyset} \right) \\
&- \kf{\syou{4}} \left( \kf{\syou{2}} + \kf{\syou{1}} + \kf{\emptyset} \right)
\end{align*}
by (\ref{eq:Pieri_example}).

Then we consider a similar expansion for $\kf{R_3\cup R_3}$.
We have
\begin{align*}
\kf{R_3\cup\syou{3}} \left( \kf{\syou{3}} + \kf{\syou{2}} + \kf{\syou{1}} + \kf{\emptyset} \right) &= \kf{R_3\cup\syou{3,3}} + \kf{R_3\cup\syou{2,4}} \\
\kf{R_3\cup\syou{4}} \left( \kf{\syou{2}} + 2 \kf{\syou{1}} + 3 \kf{\emptyset} \right) &= \kf{R_3\cup\syou{2,4}},
\end{align*}
by \cite[Lemma 26 (2),(3)]{Takigiku_part1} and \cite[Lemma 28 (2)]{Takigiku_part1}.
From this, or directly by \cite[Lemma 29 (3)]{Takigiku_part1},
we have
\begin{align*}
\kf{R_3\cup R_3}
&= \kf{R_3\cup\syou{3}} \left( \kf{\syou{3}} + \kf{\syou{2}} + \kf{\syou{1}} + \kf{\emptyset} \right) \\
&- \kf{R_3\cup\syou{4}} \left( \kf{\syou{2}} + 2 \kf{\syou{1}} + 3 \kf{\emptyset} \right).
\end{align*}
Since we proved
$\kf{R_3\cup\syou{3}} = \kf{R_3} \left( \kf{\syou{3}} + \kf{\syou{2}} + \kf{\syou{1}} + \kf{\emptyset} \right)$
and
$\kf{R_3\cup\syou{4}} = \kf{R_3} \kf{\syou{4}}$ in \cite[Theorem 23]{Takigiku_part1},
we have
\begin{align*}
\frac{\kf{R_3\cup R_3}}{\kf{R_3}}
&= \underbrace{\left( \kf{\syou{3}} + \kf{\syou{2}} + \kf{\syou{1}} + \kf{\emptyset} \right) \left( \kf{\syou{3}} + \kf{\syou{2}} + \kf{\syou{1}} + \kf{\emptyset} \right)} _{(A)} \\
&\phantom{=}- \underbrace{\kf{\syou{4}} \left( \kf{\syou{2}} + 2 \kf{\syou{1}} + 3 \kf{\emptyset} \right)}_{(B)}.
\end{align*}

Then using
(\ref{eq:ab_sum})
for $(a,b)=(3,3),(3,2),(2,2),(2,1),(1,1),(1,0),(0,0)$ (for (A))
and for $(a,b)=(4,2),(4,1),(4,0)$ (for (B)),
we have
\begin{align*}
	(A) &= \sum_{\substack{ l(\mu)\le 2 \\ \mu_1\le 4 \\ |\mu|\le 6 }} \kf{\mu}, 
	\qquad\qquad(B) = \sum_{\substack{ l(\mu)\le 2 \\ \mu_1 = 4 \\ |\mu|\le 6 }} \kf{\mu}.
\end{align*}
Hence we obtain 
$$
\frac{\kf{R_3\cup R_3}}{\kf{R_3}} 
= \sum_{\substack{ l(\mu)\le 2 \\ \mu_1\le 3 \\ |\mu| \le 6 }} \kf{\mu}
= \sum_{\mu \subset R_3} \kf{\mu}.
$$
}

%%%%%%%%%%%%% general example %%%%%%%%%%%%%%%%%

Next let us explain how to calculate $\kks{R_t\cup R_t}$ in general.

We shall write $\overline{R_t} = (t^{k-t})$, $\overline{R_t}+(1^i) = ((t+1)^{i} t^{k-t-i})$.
Then we already know that
\begin{align*}
\kks{R_t\cup R_t}
&= \kks{R_t\cup (t^{k+1-t})}\\
&= \kks{R_t\cup \overline{R_t}} \sum_{i\ge 0} 
	\binom{i}{i} h_{t-i} \\ 
&\phantom{=}- \kks{R_t\cup (\overline{R_t}+(1^1))} \sum_{i\ge 0} 
	\binom{i+1}{i} h_{t-1-i} \\ 
&\phantom{=}+ \kks{R_t\cup (\overline{R_t}+(1^2))} \sum_{i\ge 0} 
	\binom{i+2}{i} h_{t-2-i} \\ 
&\phantom{=} \dots \\
&\phantom{=} + (-1)^{k-t-1} \kks{R_t\cup (\overline{R_t}+(1^{k-t-1}))} \sum_{i\ge 0} 
	\binom{i+k-t-1}{i} h_{2t-k+1-i} \\ 
&\phantom{=} + (-1)^{k-t} \kks{R_t\cup (\overline{R_t}+(1^{k-t}))} \sum_{i\ge 0} 
	\binom{i+k-t}{i} h_{2t-k-i},
\end{align*}
by applying \cite[Lemma 29 (3)]{Takigiku_part1}
rewritten by using
\cite[Lemma 34]{Takigiku_part1}
(similarly to Remark after \cite[Lemma 34]{Takigiku_part1})
for $P=R_t$, $\mu=(t^{k-t})$, $r=t$.

%$\overline{R_t}+(1^i) = $ has $i$ parts larger than $t$ and 
%As a generalization of the obtained results,
Having calculated
some examples,
we may claim (and actually we shall prove later) that
\[\kks{R_t \cup (\overline{R_t}+(1^i))}
=\kks{R_t}\displaystyle\sum_{(t+1)^i\subset\eta\subset(\overline{R_t}+(1^i))} \kks{\eta}.\]

Now we assume this so that we have

\begin{align}
	\frac{\kks{R_t\cup R_t}}{\kks{R_t}}
	&= \sum_{\eta\subset \overline{R_t}} \kks{\eta}
	\sum_{i\ge 0} 
	\binom{i}{i} h_{t-i} \label{eq:same_ex} \\ 
	&\phantom{=}- \sum_{ (t+1)\subset \eta\subset (\overline{R_t}+(1^1))}  \kks{\eta}
	\sum_{i\ge 0} 
	\binom{i+1}{i} h_{t-1-i} \notag \\ 
	&\phantom{=}+ \sum_{((t+1)^2)\subset\eta\subset(\overline{R_t}+(1^2))} \kks{\eta}
	\sum_{i\ge 0} 
	\binom{i+2}{i} h_{t-2-i} \notag \\ 
	&\phantom{=} \dots \notag \\
	&\phantom{=} + (-1)^{k-t-1} \sum_{((t+1)^{k-t-1})\subset\eta\subset(\overline{R_t}+(1^{k-t-1}))} \kks{\eta}
	\sum_{i\ge 0} 
	\binom{i+k-t-1}{i} h_{2t-k+1-i} \notag \\ 
	&\phantom{=} + (-1)^{k-t} \sum_{((t+1)^{k-t})\subset\eta\subset(\overline{R_t}+(1^{k-t}))} \kks{\eta}
	\sum_{i\ge 0} 
	\binom{i+k-t}{i} h_{2t-k-i}. \notag
\end{align}

Next we substitute the Pieri rule (\ref{eq:Pieri}) for each of the summations in the RHS of (\ref{eq:same_ex}), %the above equation,
then nontrivial cancellations happen, finally we have, for each $0\le j\le k-t$,
\begin{align}
	&\sum_{((t+1)^j)\subset\eta\subset(\overline{R_t}+(1^j))} \kks{\eta}
	\sum_{i\ge 0} \binom{i+j}{i} h_{t-j-i} \label{eq:hoge}\\
	&\quad= \sum_{\substack{
		\nu \text{ s.t.} \\
		\nu \subset (t+1)^{k+1-t} \\
		j\le \nu'_{t+1}\le j+1 \\
		|\nu\sm \overline{R_t}| \le t
	}}
	\binom{t-\nu_{k+1-t}-\de[\nu'_{t+1}>0]}{t-|\nu\sm \overline{R_t}|} \kks{\nu} \notag\\
	&\quad\phantom{=}+ \sum_{\substack{
		\nu \text{ s.t.} \\
		\nu \subset (k^1(t+1)^{k-t}) \\
		\nu_1>t+1 \\
		j\le\nu'_{t+1}\le j+1 \\
		\core(\nu)_1+\nu'_{t+1}-1 \le 2t
	}}
	\binom{2t-\core(\nu)_{1}}{2t-\core(\nu)_1+1-\nu'_{t+1}} \kks{\nu}.\notag
\end{align}
(This calculation will be shown in a generalized form in Lemma \ref{prop:StepA_Goal} later)

%Note that the second sum of (\ref{eq:hoge}) is zero if $j=k-t$ since $\nu'_{t+1}=k-t+1$ and $(\nu_{k+1-t}+\nu'_{t+1}=)|\nu\sm \overline{R_t}|\le t$ cannot occur simultaneously.
Note that $\nu'_{t+1}=k-t+1$ never happens in the summations of (\ref{eq:hoge}) since 
it violates $(\nu_{k+1-t}+\nu'_{t+1}=)|\nu\sm \overline{R_t}|\le t$
or $(\nu_1+\nu_{k-t+1}+\nu'_{t+1}-1\le)\core(\nu)_1+\nu'_{t+1}-1 \le 2t$.

As a result, we have
\begin{align*}
	\frac{\kks{R_t\cup R_t}}{\kks{R_t}}
	&= \sum_{\substack{
		\nu \subset (t+1)^{k+1-t} \\
		\nu'_{t+1}=0 \\
		|\nu\sm \overline{R_t}| \le t
	}}
	\binom{t-\nu_{k+1-t}-0}{t-|\nu\sm \overline{R_t}|} \kks{\nu}, \\
	\intertext{
		since all the summations in the RHS of (\ref{eq:hoge}) except the first summation of the case $\nu'_{t+1}=j=0$ are cancelled each other.
		Noting that $|\nu\sm \overline{R_t}|=\nu_{k+1-t}$ when $\nu'_{t+1}=0$, we have
	}
	&= \sum_{\substack{
		\nu \subset (t)^{k+1-t}=R_t \\
	}}
	\binom{t-\nu_{k+1-t}}{t-\nu_{k+1-t}} \kks{\nu} \\
	&= \sum_{\nu\subset R_t} \kks{\nu},
\end{align*}
as desired.

%%%%%%%%%%%%%%%%%%%%%%%%%%%%%%%%%%%%%%%

\vspace{2mm}

As mentioned above,
though our first purpose was to calculate $\kks{R_t \cup R_t}$, 
we shall prove it in a somewhat more general form.

This section is devoted to proving the following theorem.

For any partition $\la$,
% Let us extend the definition of $\la\ci$ to any partition $\la$ by saying that 
let $\la\ci=(\la_1,\dots,\la_i)$ if $\la_i > t \ge \la_{i+1}$ (we set $\la\ci=\emptyset$ if $t\ge \la_1$).

\begin{theo}\label{samek_goal}%\newverb{samek\_goal}
	Let $\la,\bla,\bl$ be as in \Nla, in Section \ref{sec:prel_part2}. % p.\pageref{nota:Nla}.
	Write $v=\la_{l(\la)}$.
	Assume
	\begin{equation}\label{eq:same_assump8}
	\bla_\bl \ge t \ge v.
	\end{equation}
	Then we have
	\begin{equation}\label{eq:samek_goal}
		\kks{R_t\cup\la} = \kks{R_t}\sum_{\core(\la\ci)\subset\core(\nu)\subset\core(\la)} \kks{\nu}.
	\end{equation}

In particular, we have
\begin{equation}\label{eq:samek_coro}
\kks{R_t\cup R_t} = \kks{R_t}\sum_{\nu\subset R_t} \kks{\nu}.
\end{equation}
\end{theo}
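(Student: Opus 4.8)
The plan is to prove the more general identity~\eqref{eq:samek_goal} by induction on $|\la|$, using the Pieri-type recursion that already appeared in the introduction's model computation, and to extract~\eqref{eq:samek_coro} by specializing $\la = R_t$. First I would set up the induction: write $\la = \la\ci \sqcup (\text{rows }\le t)$, peel off the last row $v = \la_{l(\la)}$, and use the expansion of $\kks{R_t\cup\la}$ into products $\kks{R_t\cup(\la\ci\cup\text{shorter tail})}\cdot(\text{sums of }h_r)$ in the spirit of~\eqref{eq:Pieri_example} and the cited \cite[Lemma~29(3)]{Takigiku_part1}, rewritten via \cite[Lemma~34]{Takigiku_part1}. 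By the inductive hypothesis each $\kks{R_t\cup(\cdots)}$ factors as $\kks{R_t}$ times a sum of $\kks{\eta}$ over the appropriate core-interval, so after dividing by $\kks{R_t}$ the whole right side becomes a linear combination of products (sum of $\kks{\eta}$)$\cdot$(sum of $h_r$), exactly as in~\eqref{eq:same_ex}. This step is mostly bookkeeping once the shape of the $\la\ci$-constraint $\core(\la\ci)\subset\core(\nu)$ is tracked through; the condition~\eqref{eq:same_assump8}, $\bla_\bl\ge t\ge v$, is what guarantees the last row behaves like a ``$k$-rectangle tail'' and makes the cited lemmas applicable.

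The technical heart is the cancellation computation: substitute the Pieri rule~\eqref{eq:Pieri} into each product $\bigl(\sum_{\eta}\kks{\eta}\bigr)\bigl(\sum_i\binom{i+j}{i}h_{t-j-i}\bigr)$ and show that the result collapses to the two-term expression displayed in~\eqref{eq:hoge}, after which almost all terms across different $j$ telescope. I expect the paper to isolate this as \textbf{Lemma~\ref{prop:StepA_Goal}} (referenced forward in the excerpt as the generalized version of the~\eqref{eq:hoge} computation), so I would state and prove that lemma first: fix a target core $\core(\nu)$, collect the coefficient of $\kks{\nu}$ coming from all $(\eta, s, r)$ contributing to it, and show the alternating binomial sum reduces via Lemma~\ref{binom_fold} (the $\sum_x(-1)^x\binom{q-\de[x=b]}{a-x}=\de[a,b\ge0]\binom{q-1}{a}$ identity) to the single binomial coefficient $\binom{2t-\core(\nu)_1}{\cdots}$ or $\binom{t-\nu_{k+1-t}-\de[\cdots]}{\cdots}$ appearing in~\eqref{eq:hoge}. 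The delicate points are (i) correctly matching weak-strip data $\core(\mu)/\core(\la)$ against residues so that $r_{\core(\mu)\core(\la)}$ is controlled — this is where one must carefully count $\la$-nonblocked removable corners — and (ii) checking the boundary/degenerate cases (e.g.\ $\nu'_{t+1}=0$ versus $>0$, and the excluded case $\nu'_{t+1}=k-t+1$ ruled out by the two inequalities noted after~\eqref{eq:hoge}).

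Once Lemma~\ref{prop:StepA_Goal} is in hand, the remaining argument is the telescoping already sketched in the excerpt: summing~\eqref{eq:hoge} over $j$ with the signs $(-1)^j$ from~\eqref{eq:same_ex}, every contribution cancels except the $j=0$, $\nu'_{t+1}=0$ piece, which — using $|\nu\sm\overline{R_t}| = \nu_{k+1-t}$ when $\nu'_{t+1}=0$ — gives $\sum_{\nu\subset R_t}\binom{t-\nu_{k+1-t}}{t-\nu_{k+1-t}}\kks{\nu} = \sum_{\nu\subset R_t}\kks{\nu}$. For the general~\eqref{eq:samek_goal} the same telescoping survives but the surviving range of $\nu$ is cut down by the inherited constraint $\core(\la\ci)\subset\core(\nu)$, yielding $\sum_{\core(\la\ci)\subset\core(\nu)\subset\core(\la)}\kks{\nu}$; specializing $\la=R_t$ (so $\la\ci=\emptyset$, $v=t$, $\bla_\bl=t$, and~\eqref{eq:same_assump8} holds) recovers~\eqref{eq:samek_coro}. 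The main obstacle is therefore squarely the cancellation lemma — in particular verifying that the residue count $r_{\core(\mu)\core(\la)}$ entering the Pieri coefficients is exactly what is needed for Lemma~\ref{binom_fold} to apply with the right parameters $q,a,b$ — rather than the (routine, if lengthy) induction and telescoping around it.
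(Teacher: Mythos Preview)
Your overall architecture matches the paper's: induct (the paper uses $\bl=l(\la)-1$ rather than $|\la|$, which is cleaner since the $\mu$'s appearing have strictly fewer rows but not necessarily smaller size), expand $\kks{R_t\cup\la}$ via \cite[Lemma~29(3), Lemma~34]{Takigiku_part1}, apply the inductive hypothesis to each $\kks{R_t\cup\mu}$, and then reduce the resulting sum. The paper indeed isolates the Pieri substitution as Lemma~\ref{prop:StepA_Goal} (``Step~(A)''), though its proof does not use Lemma~\ref{binom_fold} but rather a Vandermonde-type identity after rewriting the sum over pairs $(\eta,\ka)$ with $\ka\subset\eta$ coming from the removable-corner count.

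The genuine gap is your treatment of what you call the telescoping step. In the $\la=R_t$ example, $\bla=\overline{R_t}$ and the $\mu$'s in the expansion are exactly $\overline{R_t}+(1^j)$ for $0\le j\le k-t$, a one-parameter family, so the sum over $\mu$ really is a single telescope. For general $\la$ under \Nla, however, $\mu$ ranges over \emph{all} $\mu\subset R_{\bl}$ with $\mu/\bla$ a vertical strip, and this set is not a chain. The paper's ``Step~(B)'' (Section~3.4) parameterizes such $\mu$ by tuples $((s_1,\dots,s_b),S,(x_1,\dots,x_e))$ recording how much $\mu$ extends $\bla$ above each connected piece of $(\nu\cap R_{\bl})\sm\bla$ and above each $\bla$-addable corner outside $\nu$ (Claim~1), then computes $f_1(\mu),f_2(\mu),|\mu/\bla|$ etc.\ in these coordinates (Claim~2), and finally performs a \emph{multi-step} cancellation: first over each $s_i$ (using Lemma~\ref{binom_fold}), then over membership in $S$, then iteratively over the $x_j$ (again Lemma~\ref{binom_fold}). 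Only after all of this does one arrive at $a_\nu=\DE{\core(\la\ci)\subset\core(\nu)\subset\core(\la)}$. So ``the same telescoping survives'' is not accurate: Step~(B) is at least as much work as Step~(A), and is where Lemma~\ref{binom_fold} is actually deployed.
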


Substituting this result into
(31) %(\ref{eq:Rta})
in the proof of
\cite[Theorem 31]{Takigiku_part1}
% Theorem \ref{dist_krec_factor}
replaced $t_m$ with $t$ and $a_m$ with $a$,
we have
\begin{theo}\label{theo:Rta}
	For $1\le t \le k$ and $a>0$, we have
$$\kks{R_t^a}=\krt \left(\sum_{\la\subset R_t} \kl\right)^{a-1}.$$
Thus, substituting this into
\cite[Theorem 31]{Takigiku_part1}
% Theorem \ref{dist_krec_factor},
we have
\[
	\kks{\RRna} = \kks{R_{t_1}} \left(\sum_{\la^{(1)}\subset R_{t_1}} \kks{\la^{(1)}}\right)^{a_1-1}
	\dots
	\kks{R_{t_n}} \left(\sum_{\la^{(n)}\subset R_{t_n}} \kks{\la^{(n)}}\right)^{a_n-1}.
\]
\end{theo}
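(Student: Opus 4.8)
The proof plan is to establish the more general formula \eqref{eq:samek_goal} by induction on $|\la|$, using the Pieri-type expansion that is already the engine of the computation sketched above. The base case is when $l(\la)=1$, i.e.\ $\la=(v)$ with $k\ge v$ arbitrary but $\la\ci$ either empty (if $t\ge v$, here $v=\la_{l(\la)}\le t$, so always $\la\ci=\emptyset$); this reduces to showing $\kks{R_t\cup(v)}=\kks{R_t}\sum_{\nu\subset\core(v)}\kks{\nu}=\kks{R_t}\sum_{0\le i\le v}\kks{(i)}$, which follows from \cite[Theorem 23]{Takigiku_part1} (the $t=k$ case cited in the excerpt, suitably interpreted) or directly from the Pieri rule combined with \eqref{eq:divisible}; in any case the one-row divisibility is already known. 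For the inductive step I would peel off the last row of $\la$: write $\la=(\bla,v)$ and express $\kks{R_t\cup\la}$ via the analogue of \eqref{eq:Pieri_example}, namely $\kks{R_t\cup\la}=\kks{R_t\cup\bla}(h_v+\dots+h_0)-\kks{R_t\cup(\bla+(1))}(h_{v-1}+\dots+h_0)$ after accounting for the $k$-bounded corrections, exactly the mechanism used to derive the displayed expansion of $\kf{R_3\cup R_3}$. Both $R_t\cup\bla$ and $R_t\cup(\bla+(1))$ have strictly smaller size in their ``$\la$-part'' once the $R_t$ is factored out, so the inductive hypothesis applies to each (one must check that the hypotheses \Nla\ and \eqref{eq:same_assump8} are inherited; this is where the condition $\bla_\bl\ge t\ge v$ is used, since adding $(1)$ to $\bla$ keeps its last part $\ge t$, and $\bla$ itself still dominates $t$).

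The heart of the argument is then the cancellation computation: after substituting the inductive hypothesis we get $\kks{R_t}$ times a double sum of the shape $\sum_{\eta}\kks{\eta}\sum_{i\ge0}\binom{i+j}{i}h_{*-i}$, and one must apply the Pieri rule \eqref{eq:Pieri} to each inner product, collect the coefficient of each $\kks{\nu}$, and show that all but the wanted terms cancel. This is precisely the content of the forthcoming Lemma \ref{prop:StepA_Goal} referenced in the excerpt, which evaluates $\sum_{((t+1)^j)\subset\eta\subset(\overline{R_t}+(1^j))}\kks{\eta}\sum_{i\ge0}\binom{i+j}{i}h_{t-j-i}$ into the two explicit sums in \eqref{eq:hoge}. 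I would prove that lemma first as a standalone computation: fix the target core $\core(\nu)$, determine which weak strips $\core(\nu)/\core(\eta)$ contribute, and reduce the resulting alternating binomial sum over the parameter $i$ (and over the choice of $\eta$, governed by $\eta'_{t+1}$) to a single binomial coefficient using Lemma \ref{binom_fold} — this is the step where the identity $\binom{x+1}{y+1}-\binom{x}{y}=\binom{x}{y+1}$ does the bookkeeping and where the $\de[\cdot]$ corrections in the Pieri rule and in Lemma \ref{binom_fold} must be matched up carefully.

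The main obstacle, as in Part I, will be the combinatorial control of the Pieri rule's structure constants: tracking which $\nu$ arise (the two families $\nu\subset(t+1)^{k+1-t}$ and $\nu\subset(k^1(t+1)^{k-t})$ with the constraints on $\nu'_{t+1}$, $\nu_{k+1-t}$, $\core(\nu)_1$), verifying that the two boundary cases $\nu'_{t+1}=j$ and $\nu'_{t+1}=j+1$ from consecutive values of $j$ really do telescope, and confirming that the ``forbidden'' value $\nu'_{t+1}=k-t+1$ cannot occur — this last point is the observation recorded just after \eqref{eq:hoge} and is what makes the telescoping terminate cleanly at $j=0$, leaving only $\sum_{\nu\subset R_t}\kks{\nu}$. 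Once Lemma \ref{prop:StepA_Goal} is in hand, assembling \eqref{eq:samek_goal} is a matter of summing \eqref{eq:hoge} over $0\le j\le k-t$ with the signs $(-1)^j$ and invoking the inductive hypothesis; the passage to \eqref{eq:samek_coro} is the specialization $\la=R_t$, and Theorem \ref{theo:Rta} then follows immediately by substituting \eqref{eq:samek_coro} and the already-established \eqref{eq:dist_split} into \cite[Theorem 31]{Takigiku_part1}.
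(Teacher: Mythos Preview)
Your outline has the same skeleton as the paper's proof --- induct by peeling off the bottom row, expand $\kks{R_t\cup\la}$ via \cite[Lemma~29(3)]{Takigiku_part1}, apply the inductive hypothesis to each piece, use Lemma~\ref{prop:StepA_Goal} to turn each $\sum_\eta\kks{\eta}\cdot(\text{polynomial in }h_i)$ into an explicit $\kks{\nu}$-sum, and then cancel --- so the strategy is correct. Two steps, however, are understated in ways that would block the argument.

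First, the two-term expansion you write,
\[
\kks{R_t\cup\la}=\kks{R_t\cup\bla}(h_v+\dots+h_0)-\kks{R_t\cup(\bla+(1))}(h_{v-1}+\dots+h_0),
\]
is only the shape of the case $l(\bla)=1$; that is why it matches the $\kf{R_3\cup R_3}$ example, where $\bla=(3)$. In general \cite[Lemma~29(3)]{Takigiku_part1} produces a signed sum over \emph{all} $\mu$ with $\bla\subset\mu\subset\Rbl$ and $\mu/\bla$ a vertical strip (equation~\eqref{Rt_mu_b}), and the $h$-coefficients are $\sum_i\binom{q_{\mu\bla}+\DE{\bla'_t=\mu'_{t+1}}+i-1}{i}h_{v-|\mu/\bla|-i}$, not plain partial sums of $h$'s. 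The inductive hypothesis must be invoked for each such $\mu$. Relatedly, the induction variable cannot be $|\la|$: these $\mu$ can have $|\mu|>|\la|$ (already for $\la=R_t$ when $2t<k$). The correct induction is on $\bl=l(\la)-1$, since every admissible $\mu$ has $l(\mu)=\bl$.

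Second, and more seriously, you describe the post--Lemma~\ref{prop:StepA_Goal} step as ``summing \eqref{eq:hoge} over $0\le j\le k-t$ with signs $(-1)^j$'', a clean telescope. That telescope is the illustrative computation for the special input $\la=R_t$, where the vertical strips $\mu/\bla$ are indexed by a single integer $j$. For the general $\la$ that the induction forces on you, this is the paper's Step~(B), and it is not a telescoping sum. One must parametrise the admissible $\mu$ by tuples $((s_i)_{i\le b},S,(x_j)_{j\le e})$ recording how $\mu/\bla$ interacts with the connected ribbon components $A_1,\dots,A_a$ of $(\nu\cap\Rbl)\sm\bla$ and with the $\bla$-addable columns missed by $\nu$ (Claim~1); then decompose each of $|\mu/\bla|$, $|\nu\sm\mu|$, $q_{\mu\bla}$, $r_{\nu\mu^\circ}$, $\DE{\bla'_t=\mu'_{t+1}}$ additively over these parameters (Claim~2); then show, by separate sign-reversing arguments over the $s_i$ and over $S$, that the contribution vanishes unless $a=b=0$ (i.e.\ $\nu\cap\Rbl\subset\bla$); and only then does an iterated application of Lemma~\ref{binom_fold} over $x_1,\dots,x_e$ collapse what remains to the single condition $\core(\la^\circ)\subset\core(\nu)\subset\core(\la)$. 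This combinatorics is invisible from the $\la=R_t$ example, and without it the induction does not close.
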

%%%%%%%%%%%%%%%% Proof of Proposition %%%%%%%%%%%%%%%%%%
\subsection{Proof of Theorem \ref{samek_goal}}
(\ref{eq:samek_coro}) follows from (\ref{eq:samek_goal}) with
$\la=R_t$, noting that $\core$ in the condition of the summation can be dropped since $\core(\la)=\la$ if $\la\subset R_t$.

Recall the notation $\de\left[P\right]$ which is $1$ if $P$ is true and $0$ if $P$ is false for a proposition $P$.

We prove (\ref{eq:samek_goal}) by induction on $\bl=l(\bla)=l(\la)-1 \ge 0$.
For the case where $\bl=0$, we consider $R_{k+1}$ to be empty.
Since $\bar\la$ is also empty, thus in this case (\ref{eq:samek_goal}) follows from \cite[Theorem 23]{Takigiku_part1} % Theorem \ref{R_t_r}.

If $\bar\la_\bl>t$, the theorem follows by \cite[Theorem 30]{Takigiku_part1}. % Theorem \ref{P_mu_r}.

Assume $\bla_\bl=t$.

First we have
\begin{equation} \label{Rt_mu_b}
\kks{R_t\cup\la} 
= \sum_{\substack{\mu\\ \mu\subset\Rbl \\ \mu/\bar\la:\text {v.s.}}}
 (-1)^{|\mu/\bar\la|} \kks{R_t\cup\mu}
 \sum_{i\ge 0} \binom{q_{\mu\bar\la}+\de[\bar\la'_t=\mu'_{t+1}]+i-1}{i} h_{v-|\mu/\bar\la|-i}
\end{equation}
by \cite[Lemma 29(3)]{Takigiku_part1} % Lemma \ref{g_mu_r_expand}(3) 
and \cite[Lemma 34]{Takigiku_part1} % Lemma \ref{A_la_q}%(3)
, where we put $q_{\ka\ga}=|\ka/\ga|+r_{\ka'\ga'}$
and rephrased the condition $\mu_\bl\neq \bar\la_\bl(=t)$ as $\bar\la'_t=\mu'_{t+1}$.

$\la\ci=\emptyset$ if $t\ge \la_1$).

For $\mu$ satisfying $\bar\la\subset\mu\subset\Rbl$, 
we have
\begin{equation} \label{Rt_la}
\kks{R_t\cup\mu} = \kks{R_t}\sum_{\mu^\circ\subset\eta\subset\mu} \kks{\eta}.
\end{equation}
by induction hypothesis.

Substituting the right-hand side of (\ref{Rt_la}) into (\ref{Rt_mu_b}), we have
\begin{equation}\label{Goal}
\frac{\kks{R_t\cup\la}}{\kks{R_t}}
= \sum_{\substack{\mu\\ \mu\subset\Rbl \\ \mu/\bar\la:\text{v.s.}}}
(-1)^{|\mu/\bar\la|} 
\sum_{\mu^\circ\subset\eta\subset\mu} \kks{\eta}
\sum_{i\ge 0} \binom{q_{\mu\bar\la}+\de[\bar\la'_t=\mu'_{t+1}]+i-1}{i} h_{v-|\mu/\bar\la|-i}.
\end{equation}

Our task is to simplify the right hand side of (\ref{Goal}) into a linear combination of 
$\kks{\nu}$ ($\nu\in\Pk$).
Since it involves long complicated calculations,
we divide our task into some steps:

\begin{itemize}
\item
	\underline{Step (A):} \\
	%For each $\mu$,
	Simplify
	$$
	\sum_{\mu^\circ\subset\eta\subset\mu} \kks{\eta}
	\sum_{i\ge 0} \binom{q_{\mu\bar\la}+\de[\bar\la'_t=\mu'_{t+1}]+i-1}{i} h_{v-|\mu/\bar\la|-i}
	$$
	into a linear combination of
	$\kks{\nu}$ ($\nu\in\Pk$).
	(See (\ref{eq:StepA_Case1}), (\ref{eq:StepA_Case2}) according to whether $\nu_1 \le k + 1 - \bl$ or $\nu_1 > k + 1 - \bl$
	and the remark after Lemma \ref{prop:StepA_Goal})
%	$K$-$k$-Schur functions.
\item
	\underline{Step (B):} \\
	Evaluate 
	the coefficient of $\kks{\nu}$ in the
	RHS of (\ref{Goal})
	expanded into a linear combination of 
	$\{\kks{\nu}\}_{\nu}$,
%	$\displaystyle\frac{\kks{R_t\cup\la}}{\kks{R_t}}$.
	which is
	the signed sum of the coefficients of $\kks{\nu}$ 
	computed in Step (A) with $\mu$ running.
\end{itemize}

\noindent{\it Remark.}
We do not need the assumption $\bla_\bl \ge t$ to calculate the RHS of (\ref{Goal})
in the following two subsections,
though we assumed it in order to derive (\ref{Goal}) itself.
Some additional arguments are needed to find whether
the equation (\ref{Goal}) holds in this more general situation.
From examining some examples, 
it seems to be true when $l(\bar\la)\le k+1-t$, but is not always true when $l(\bar\la)> k+1-t$.

\subsection{Step (A)}

This subsection is devoted to proving the following lemma.
Note that it does not assume $\mu_\bl\ge t$.

Let us introduce some notations:
for a partition $\la$ and $u\in\Z_{\ge 0}$, 
let $\la_{\le u}$ be a partition $(\la_1,\dots,\la_u)$
and $\la_{>u}$ be a skew shape $\la/\la_{\le u}$,
and define $\la_{\ge u}$ and $\la_{<u}$ similarly.

Note that, in this paper
we suppose the condition $\mu\subset\la$ when we use the notation $\la/\mu$,
%we use the notation $\la/\mu$ only if the condition $\mu\subset\la$ is satisfied,
although, we also call $\la\sm\mu$ a horizontal (resp.\ vertical) strip if there is at most one cell in each row (resp.\ column) of the difference set $\la\sm\mu$,
even if not necessarily $\mu\subset\la$.

%\begin{claim}
\begin{lemm}\label{prop:StepA_Goal}
%\begin{align*}
	Assume $\mu\subset \Rbl$ and $l(\mu)=\bl$.
Let $d\in\mathbb{Z}$ and $a, e\in\mathbb{Z}_{\ge 0}$.
Consider the following sum and write it as a linear combination of $\{\kks{\nu}\}_{\nu}$:
%Put
\begin{equation} \label{StepA_Goal}
\sum_{\mu^\circ\subset\eta\subset\mu} \kks{\eta}
 \sum_{i\ge 0} \binom{d+i}{e} h_{a-i} 
 = \sum_{\nu} b_\nu \kks{\nu}.
\end{equation}
Then the coefficient $b_\nu$ is as follows.
\begin{itemize}
	\item[$\text{(Case 1)}$]
	If $\nu_1\le k+1-\bl$,
	\begin{align*}
		b_\nu
		&=\de\left[\substack{\mu^\circ \subset \nu \\ \nu\sm\mu:\text{ \rm h.s.}}\right]
		 \sum_{\substack{x \\ 0\le x \le r_{\nu\mu^\circ}\\ |\nu\sm\mu|+x\le a}}
		 (-1)^x \binom{d+a - (|\nu\sm\mu|+x)}{e} \binom{r_{\nu\mu^\circ}}{x}.
	\end{align*}
	In addition, if $d=e\in\mathbb{Z}_{\ge 0}$,
	\begin{align}\label{eq:StepA_Case1}
		b_\nu&=\de\left[\substack{\mu^\circ \subset \nu \\ \nu\sm\mu:\text{ \rm h.s.}}\right]
		\de\left[a\ge |\nu\sm\mu|\right]
		\binom{d+a-|\nu\sm\mu|-r_{\nu\mu^\circ}}{a-|\nu\sm\mu|}.
	\end{align}
	\item[$\text{(Case 2)}$]
		 If $\nu_1>k+1-\bl$,
		 then  we put $u=\nu_1-(k+1-\bl)$
		 and $A=\nu_{\bl-u+1} + |\nu_{\le \bl-u}\sm\mu|$
		 to avoid making the equation too wide. Then
		\begin{align*}
			b_\nu &=\de\left[\substack{\mu\ci \subset \nu \\ \nu\setminus\mu:\text{ \rm h.s.} \\ (P)}\right] 
	%		      &\phantom{=}\quad\times\!\!\!\!\!\!
			\sum_{\substack{x \\
					0\le x \le r_{\nu\mu^\circ}\\
					%\nu_{l-u+1} + |\nu_{\le l-u}\sm\mu|+x\le a
					A+x\le a
			}}
			(-1)^x \binom{d+a - (A+x)}{e} \binom{r_{\nu\mu^\circ}}{x}.
		\end{align*}
	%\intertext{
		Here $(P)$ is the condition that
		\[
			(P) = \begin{cases}
			\text{an empty condition} & \text{(if $l(\mu\ci)<\bl+1-u$)}, \\
				\text{``$\mu_j=\nu_{j+1}$ for $\bl+1-u\le\forall j\le l(\mu\ci)$''} & \text{(if $l(\mu\ci)\ge \bl+1-u$)}.
			\end{cases}
		\]

		In addition, if $d=e\in\mathbb{Z}_{\ge 0}$,
	%}
		\begin{align} \label{eq:StepA_Case2}
			b_{\nu}
			&=\de\left[\substack{\mu^\circ \subset \nu \\ \nu\sm\mu:\text{ \rm h.s.} \\ (P)}\right]
			\de\left[a\ge A\right]
			\binom{d+a-A-r_{\nu\mu^\circ}}{a-A}.
		\end{align}
\end{itemize}
\end{lemm}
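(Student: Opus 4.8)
The plan is to substitute the Pieri rule (\ref{eq:Pieri}) into each summand $\kks{\eta}\,h_{a-i}$ of (\ref{StepA_Goal}), extract the coefficient of a fixed $\kks{\nu}$, and reduce the assertion to a $d$-free combinatorial identity about the interval of admissible $\eta$'s, which is then proved by a row-by-row induction through Lemma \ref{binom_fold}. Concretely: for a fixed pair $\eta\subset\mu$ and a fixed $\nu$ there is at most one weak strip $\core(\nu)/\core(\eta)$, whose size $s=s_\eta$ is then determined, so substituting (\ref{eq:Pieri}) and reading off $[\kks{\nu}]$ gives
\[
b_\nu=\sum_{\substack{\mu\ci\subset\eta\subset\mu\\ \core(\nu)/\core(\eta):\text{ w.s.}}}\ \sum_{i\ge 0}\binom{d+i}{e}(-1)^{a-i-s_\eta}\binom{r_{\core(\nu)\core(\eta)}}{a-i-s_\eta}.
\]

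Re-indexing the inner sum by $x=a-i-s_\eta$ turns it into $\sum_{x\ge 0}(-1)^x\binom{d+a-s_\eta-x}{e}\binom{r_{\core(\nu)\core(\eta)}}{x}$, which depends on $\eta$ only through the integers $s_\eta$ and $r_{\core(\nu)\core(\eta)}$; collecting terms by $y=x+s_\eta$ on the left and $y=x+c$ on the right, it suffices to prove, for each $y\in\Z$, the identity
\[
\sum_{\substack{\mu\ci\subset\eta\subset\mu\\ \core(\nu)/\core(\eta):\text{ w.s.}\\ s_\eta\le y}}(-1)^{y-s_\eta}\binom{r_{\core(\nu)\core(\eta)}}{y-s_\eta}\ =\ \DE{(*)}\,(-1)^{y-c}\binom{r_{\nu\mu\ci}}{y-c},
\]
with $c=|\nu\sm\mu|$ in Case~1, $c=A$ in Case~2, and $\DE{(*)}$ the nonvanishing condition of the lemma ($\mu\ci\subset\nu$, $\nu\sm\mu$ a horizontal strip, and $(P)$ in Case~2). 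Given this, substituting back yields the stated $b_\nu$; the $d=e$ specializations (\ref{eq:StepA_Case1}), (\ref{eq:StepA_Case2}) follow from the elementary finite-difference identity $\sum_x(-1)^x\binom{r}{x}\binom{N-x}{e}=\binom{N-r}{e-r}$, rewritten via the symmetry of the binomial coefficient under $\DE{a\ge c}$.

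For Case~1 ($\nu_1\le k+1-\bl$): since $l(\nu)\le\bl+1$ and $\eta\subset\mu\subset\Rbl$, all of $\nu,\mu,\eta,\mu\ci$ have maximal hook length $\le k$ and hence equal their own $(k+1)$-cores, so weak strips are ordinary horizontal strips, the relevant residues are all distinct, $s_\eta=|\nu|-|\eta|$, and $r_{\core(\nu)\core(\eta)}$ counts the $\nu$-nonblocked $\eta$-removable corners. The admissible $\eta$ are exactly the partitions with $\eta^{\min}\subseteq\eta\subseteq\mu\cap\nu$, where $\eta^{\min}_j=\max(\mu\ci_j,\nu_{j+1})$ (the condition that $\nu/\eta$ be a horizontal strip is $\nu_{j+1}\le\eta_j\le\nu_j$); one checks $(\mu\cap\nu)/\eta^{\min}$ is always a horizontal strip, so this interval is a product of chains, and it is nonempty precisely when $\mu\ci\subset\nu$ and $\nu\sm\mu$ is a horizontal strip — this is the source of those two conditions in $\DE{(*)}$. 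The identity is then proved by induction, peeling $\mu$ down to $\mu\ci$ one row at a time: enlarging a row of $\eta$ cell by cell leaves $r_{\core(\nu)\core(\eta)}$ fixed except when $\eta_j$ crosses a threshold dictated by $\nu$ or by the row above, and each such telescoping is precisely an instance of Lemma \ref{binom_fold}, the $\DE{x=b}$ correction there accounting for the threshold.

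For Case~2 ($\nu_1>k+1-\bl$), put $u=\nu_1-(k+1-\bl)\ge 1$, so $\core(\nu)\neq\nu$. The plan is first to write $\core(\nu)$ explicitly using the description from \cite{Takigiku_part1} of $(k+1)$-cores lying just outside a $k$-rectangle: the first $\bl-u$ rows of $\nu$ are unaffected and behave exactly as in Case~1 (contributing $|\nu_{\le\bl-u}\sm\mu|$ to $A$), while the bottom part is pushed, which both forces the extra constraint $(P)$ for a weak strip $\core(\nu)/\core(\eta)$ to exist and collapses the rest of the strip size to the single term $\nu_{\bl-u+1}$, giving $A=\nu_{\bl-u+1}+|\nu_{\le\bl-u}\sm\mu|$; with $\core(\nu)$, the admissible $\eta$, the sizes $s_\eta$ and the counts $r_{\core(\nu)\core(\eta)}$ determined, the same row-by-row induction through Lemma \ref{binom_fold} finishes the argument. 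The step I expect to be the real obstacle is exactly this Case~2 bookkeeping: verifying that the weak-strip and distinct-residue combinatorics, carried out at the level of the cores $\core(\nu),\core(\eta)$, translate verbatim into the stated $A$, $r_{\nu\mu\ci}$ (a bounded-partition count, not a core count) and condition $(P)$, and that the resulting telescoping still falls under Lemma \ref{binom_fold} in every boundary case hidden inside the $\DE{}$-factors; Case~1 is the $u=0$ degeneration and is much easier, since there every core operation is trivial.
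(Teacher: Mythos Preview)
Your strategy is sound but takes a genuinely different route from the paper. The paper does \emph{not} prove a per-$y$ identity by row telescoping. Instead it expands $\binom{r_{\core(\nu)\core(\eta)}}{s-i}$ as a count of partitions $\ka$ with $\eta/\ka\subset\{\eta\text{-removable corners}\}$, $|\eta/\ka|=s-i$, and $\core(\nu)/\ka$ a horizontal strip; it then swaps the order of summation so that $\ka$ is outer and $\eta$ is inner. For fixed $\ka$ the admissible $\eta$ range over the interval $[\mu\ci\cup\ka,\ \mu\cap\nu\cap\ti\ka]$ (with the weak-strip constraint); since $\ti\ka/\ka$ is an antichain this is Boolean, and the signed inner sum vanishes unless the interval collapses to a point. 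That collapse forces $\ka$ explicitly: $\ka=(\nu\cap\mu)\sm S$ for some subset $S$ of the $\nu$-nonblocked removable corners of $\mu\ci$ (Case~1), with analogous descriptions in Cases~2-1 and 2-2. Summing over $S$ gives the stated $b_\nu$ directly, and the $d=e$ simplification is done by the same Vandermonde manipulation you cite.

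What your approach buys is that it avoids the auxiliary $\ka$ entirely and works with $\eta$ alone. What makes your row-by-row induction actually go through --- and you should state this explicitly, since it is the linchpin --- is that under the horizontal-strip constraint $\nu_{j+1}\le\eta_j\le\nu_j$, a cell $(j,\eta_j)$ is a $\nu$-nonblocked $\eta$-removable corner \emph{iff} $\eta_j>\nu_{j+1}$, independently of the other rows; hence $r_\eta=r_{\nu\mu\ci}+\sum_{j>l(\mu\ci)}\DE{\eta_j>\nu_{j+1}}$ decomposes row-by-row, and each row's sum is exactly a Lemma~\ref{binom_fold} instance (reindex from the top of the chain so the threshold sits at $x=b$). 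Without this decomposition the cross-row dependence of removable corners would obstruct the induction. For Case~2 your plan is correct in outline --- the weak-strip condition forces $\eta_j=\nu_{j+1}$ for $j>\bl-u$ and leaves rows $\le\bl-u$ free as in Case~1 --- but you still have to check that $r_{\core(\nu)\core(\eta)}$ (a core count) reduces to the bounded-partition count $r_{\nu\mu\ci}$ plus the free-row contributions; the paper handles this via the explicit form of $\kconj{\nu}'$, and you will need the same computation. The paper's $\ka$-trick has the advantage that Cases~1, 2-1, 2-2 are handled by a uniform collapse argument with only the shape of the forced region changing; your approach is conceptually lighter but requires the row-decomposition of $r_\eta$ to be checked in each case.
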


\noindent{\it Remark.}
Step (A) immediately follows from this lemma by putting
$d = e = q_{\mu\bla} + \DE{\bla'_t = \mu'_{t+1}} - 1$ and
$a = v - |\mu/\bla|$,
noting that $\binom{d+i}{d} = \binom{d+i}{i}$.

\begin{proof} %[Proof of claim]
Due to the Pieri rule (\ref{eq:Pieri}), 
the coefficient of $\kks{\nu}$ in the LHS of (\ref{StepA_Goal}) is
\begin{align*}
b_{\nu}
&=\sum_{s=0}^a \binom{d+a-s}{e} \sum_{i=0}^s
\sum_{\substack{\eta \text{ s.t.} \\ \mu^\circ\subset\eta\subset\mu \\ \text{$\core(\nu)/\core(\eta)$:w.s.of size $i$}}}
(-1)^{s-i} \binom{r_{\core(\nu),\core(\eta)}}{s-i}. 
\end{align*}
%%%%%%%%%%%%%%%%%%%%%%%%%%%%%%%%%%%%%%%%%%%%%%%%%%%%%%%%%%%%%%%%%%%%
%\intertext{

Since $\eta\subset\mu\subset\Rbl$, we have $\core(\eta)=\eta$
and
there never exist more than one $\eta$-removable corners of the same residue.
Thus 
$$r_{\core(\nu)\core(\eta)}=\#\{\core(\nu)\text{-nonblocked } \eta\text{-removable corners}\}$$
and
$$ \binom{r_{\core(\nu)\core(\eta)}}{s-i} = 
\#\left\{\,\ka\ \middle| 
	\begin{array}{l}
		\text{$\eta/\ka\subset\{\eta\text{-removable corners}\}$,} \\
		\text{$|\eta/\ka| = s-i$, \ $\core(\nu)/\ka$: h.s.}
	\end{array}
\right\}. $$
Thus
%}
%%%%%%%%%%%%%%%%%%%%%%%%%%%%%%%%%%%%%%%%%%%%%%%%%%%%%%%%%%%%%%%%%%%%
\begin{align}
b_{\nu}&=
\sum_{s=0}^a  \sum_{i=0}^s
\sum_{\substack{\eta \text{ s.t.}\\ \mu^\circ\subset\eta\subset\mu \\ \core(\nu)/\eta\text{:w.s.of size $i$}}}
(-1)^{s-i} \binom{d+a-s}{e} 
\sum_{\substack{\ka \text{ s.t.} \\ \ka\subset\eta \\ \eta/\ka\subset\{\eta\text{-rem.\ cor.}\} \\ |\eta/\ka|=s-i \\ \core(\nu)/\ka:\text{ h.s.} }} 1. \label{eq:b_nu}
%%%%%%%%%%%%%%%%%%%%%%%%%%%%%%%%%%%%%
\end{align}
%\intertext{
%where we abbriviate removable corner to rem.\ cor.
Then, removing the summations $\sum_s$ and $\sum_a$
with paying attention to the relations
$i=|\nu/\eta|$ and $s=|\nu/\eta|+|\eta/\ka|=|\nu/\ka|$
and that the condition on $i$ and $s$ is $0\le i \le s \le a$,
we have
\begin{align*}
b_\nu&=
\sum_{\substack{ (\eta,\ka) %\\ 
%		 \mu^\circ \subset \eta \subset \mu \\
%		 \core(\nu)/\eta:\text{w.s.} \\
%		 %\ka \subset \eta \subset \nu \\
%		 \ka \subset \eta \\
%		 \core(\nu)/\ka : h.s. \\
%		 |\nu/\ka|\le a \\
%		 %\eta/\ka\subset\{\text{add.cor.of $\ka$}\}
%		 \eta/\ka\subset\{\text{rem.cor.of $\eta$}\}
}}
(-1)^{|\eta/\ka|} \binom{d+a-|\nu/\ka|}{e}, %\\
\end{align*}
%%%%%%%%%%%%%%%%%%%%%%
%\intertext{
	summing over $(\eta,\ka)$ with conditions
	\[
		\begin{cases}
			\text{(a)}\quad \mu^\circ \subset \eta \subset \mu, \\
			\text{(b)}\quad  \text{$\core(\nu)/\eta$ :weak strip}, \\
		 %\ka \subset \eta \subset \nu \\
			\text{(c)}\quad  \ka \subset \eta, \\
			\text{(d)}\quad  \eta/\ka\subset\{\text{$\eta$-removable corners}\}, \\
			\text{(e)}\quad  \text{$\core(\nu)/\ka$ : horizontal strip}, \\
			\text{(f)}\quad  |\nu/\ka|\le a.
		 %\eta/\ka\subset\{\text{add.cor.of $\ka$}\}
		\end{cases}
	\]
	({\it Note}: The conditions (a) and (b) come from
	the conditions on $\eta$ in the summation $\sum_{\eta}$ in (\ref{eq:b_nu}),
	(c),(d) and (e) come from the condition to determine $\ka$ from $\eta$ in the summation $\sum_{\la}$ in (\ref{eq:b_nu}),
	% $\sum_{\ka}$,
	and (f) comes from the condition $s\le a$.
	The conditions about the size of $\eta/\ka$ and the weak strip $\core(\nu)/\eta$ have been removed
	since $i$ runs under $0\le i \le s$. ) \\
	Note that
	$$ \text{(d)} \iff \text{$\eta/\ka\subset$ \{$\ka$-addable corners\}}
		\iff \eta \subset \ti\ka,$$
	where we put $\tilde\ka = \ka\cup\{\text{$\ka$-addable corners}\}$.
	
	Then we rewrite the summation so as to determine $\ka$ first according to the conditions (e) and (f), 
%	(e) and (f) are the conditions of $\ka$.
	and then to choose $\eta$ by the conditions (a)-(d).
	Here the conditions (a),(c),(d), together with $\eta\subset\nu$ which is trivially implied by (b)
	(recall \cite[Definition 3(3)]{Takigiku_part1}), 
	% Definition \ref{weakstrip}),
	can be rewritten as a single condition
	$\mu^\circ \cup \ka \subset \eta \subset \mu \cap \nu \cap \tilde\ka$, which we denote by (g).
	Thus we obtain
%}
%%%%%%%%%%%%%%%%%%%%%%
\begin{align*}
b_{\nu}&=
	\sum_{\substack{ \ka \text{ s.t.} \\ 
		% \ka \subset \nu \\
		\text{(e):} \core(\nu)/\ka \text{:h.s.} \\
		\text{(f):} |\nu/\ka|\le a 
}}
\underbrace{
	\sum_{\substack{ \eta \text{ s.t.} \\ 
		\text{(g):} \mu^\circ \cup \ka \subset \eta \subset \mu \cap \nu \cap \tilde\ka \\
		\text{(b):} \core(\nu)/\eta \text{:w.s.} 
}}
(-1)^{|\eta/\ka|} \binom{d+a-|\nu/\ka|}{e}.
}_{(X)} %\\
%&=:(A).
\end{align*}

%\vspace{2mm}
Clearly $b_{\nu}=0$ if $\l(\nu) > \bl+1$,
since $\ka$ must satisfy $\ka\subset\mu\subset\Rbl$ and $\core(\nu)/\ka$ must be a horizontal strip.
Hereafter we assume 
\begin{equation}\label{nu_cond1}
\l(\nu)\le \bl+1.
\end{equation}

Next we find conditions on $\ka$ for which the sum $(X)$ is nonzero.

\vspace{2mm}
\noindent\underline{\it Case 1: $\nu_1\le k+1-\bl$}

\vspace{2mm}
In this case
%we can drop 
the condition (b)
%that 
($\core(\nu)/\eta$ : weak strip)
%because it 
is equivalent to the condition that $\nu/\eta$ is a horizontal strip
as explained below:
by the characterization of weak strips, we have
\[
\text{$\core(\nu)/\eta(=\core(\eta))$ : w.s.} \iff
\begin{cases}
	\text{(p): $\nu/\eta$ : h.s.} \qquad \text{and}\\
	\text{(q): $\kconj\nu/\kconj\eta$ : v.s. $\iff {\kconj\nu}'/{\kconj\eta}'(=\eta)$ : h.s.}\ .
\end{cases}
\]
Since $\nu_1\le k+1-\bl$ we have
$\core(\nu) = \nu \text{ or } (\nu_1+\nu_{\bl+1},\nu_2,\nu_3,\ldots,\nu_{\bl+1})$,
and thus $\kconj{\nu} = \nu' \text{ or } (\nu_1+\nu_{\bl+1},\nu_2,\ldots,\nu_\bl)'$.
Therefore (p) implies (q).

Besides,
$\nu/\eta$ is always a horizontal strip if $\core(\nu)/\ka$ is a horizontal strip and $\ka\subset\eta\subset\nu\left(\subset\core(\nu)\right)$.
Therefore we can drop the condition
%``$\core(\nu)/\eta$ is a weak strip''
(b)
in $(X)$.

\vspace{2mm}
Hence, $(X)=0$ unless $\mu^\circ\cup\ka = \mu \cap \nu \cap \wti\ka$
because $\eta$ runs over the interval $[\mu^\circ\cup\ka, \mu \cap \nu \cap \wti\ka]$
which is isomorphic to a Boolean lattice since $(\mu \cap \nu \cap \wti\ka) / (\mu^\circ\cup\ka)$ is a subset of an antichain $\wti\ka/\ka$,
% on which there is an involution $\eta \leftrightarrow \eta \mathrm{xor} c$ with a fixed $\mu \cap \nu \cap \wti\ka$-removable corner
and the summands are constant up to a sign determined by $\eta$.

Moreover,
\begin{align*}
&\mu^\circ\cup\ka = \mu \cap \nu \cap \tilde\ka \\
&\iff
\begin{cases}
	(1): \max(\mu_j,\ka_j) = \min(\mu_j,\nu_j,\ti\ka_j) & (1\le j\le l(\mu\ci)), \\
	(2): \ka_j = \min(\mu_j,\nu_j,\ti\ka_j) & (l(\mu\ci) < j),
\end{cases}\\
&\iff
\begin{cases}
	(1'): \ka_j \le \mu_j \le \nu_j,\ti\ka_j & (1\le j\le l(\mu\ci)), \\
	(2'): \ka_j = \min(\mu_j,\nu_j) & (l(\mu\ci) < j),
\end{cases}\\
&\iff
\begin{cases}
	(0): \mu\ci\subset\nu, \\
	(1''): \ka_{\le l(\mu\ci)} = \mu\ci\setminus(\text{some rem.\ cor.\ of $\mu\ci$}), \\
	(2'): \ka_j = \min(\mu_j,\nu_j) & \hspace{-1cm}(l(\mu\ci) < j).
\end{cases}
\end{align*}

Here,

\underline{$(1)\iff(1')$} is obvious. 

\underline{$(1')\iff(0),(1'')$:} 
\begin{align*}
	(1') 
	&\iff
	\begin{cases}
		(0), \\ 
		\ka_j\le\mu_j\le\ti\ka_j & (1\le j\le l(\mu\ci)),
	\end{cases} \\
	&\iff
	\begin{cases}
		(0), \\
		\ka_{\le l(\mu\ci)}\subset\mu\ci, \\
		\mu\ci/\ka_{\le l(\mu\ci)}\subset\{\text{$\ka_{\le l(\mu\ci)}$-addable corners}\},
	\end{cases} \\
	&\iff
	(0) \text{ and } (1'').
\end{align*}

\underline{$(2)\Lra(2')$:} since $\nu/\ka$ is a horizontal strip by (e),
we have
$\nu_j>\ka_j \Lra \ka_{j-1}\ge\nu_j>\ka_j \Lra \ti\ka_j=\ka_j+1$.
Hence  we have ``(2) $\Lra (\nu_j>\ka_j \Lra \ka_j=\mu_j)$''.

\underline{$(2')\Lra(2)$:} obvious.

\[
\begin{tikzpicture}[scale=0.25]
\draw (0,0) -| (17,3) -| (14,6) -| (11,9) -| (0,0);
\node at (6,4.5) {$\mu\ci$};
%\draw (0,9) rectangle (8,14);
\draw (8,9) -- (8,10) -| (6,12) -| (4,14) -| (0,9);

\draw [loosely dotted,thick] (4,14) -| (21,0);
\draw [loosely dotted,thick] (8,10) -| (8,15);

\node [left] at (0,14.5) {$\bl+1$};
%\node [left] at (0,11.5) {$l+1-u$};
%\draw (21,-0.1) to [out=-30,in=-150] node[below=1pt]{$u$} (24,-0.1);
\draw (0,-0.1) to[out=-12,in=-168] node[below=1pt]{$k+1-\bl$} (21,-0.1);
\draw (0,15) to[out=12,in=168] node[above=1pt]{$t$} (8,15);

%\draw [dotted] (0,11) -- (5,11);

\draw [red] (8,9.5) to [out=45, in=180] (11,12) node[right=1pt]{$\ka$};
\draw [blue] (14.1,6.5) to [out=45, in=180] (16,9) node[right=1pt]{$\nu$};
\draw (6, 12) to [out=30,in=180] (10,15) node[right=1pt]{$\mu$};

\draw [red] (0.1,0.1) -| (16.9,2.9) -| (13.9,5.9) -| (10.9,8.9) -| (7.9,9.95) -| (5.95,10.95) -| (4.95,11.95) -| (3.95,12.95) -| (2.95,13.9) -| (0.1,0.1);
\draw [red] (10,8) rectangle (10.9,8.9);
\draw [red] (16,2) rectangle (16.9,2.9);

\draw [blue,decorate,decoration={zigzag,segment length=1mm,amplitude=.2mm}]
	(-0.1,-0.1) -| (20,1) -| (17.1,3.1) -| (14.1,7.1) -| (11.1,9.1) -| (10.1,10.05) 
	-| (7.95, 11.05) -| (5.05, 13.05) -| (3.05, 14.1) -| (2.01, 15) -| (-0.1,-0.1);
%	-| (6.05,11.05) -| (5.05,12.05) -| (4.05,13.05) -| (3.05,14.1) -| (2.01,15) -| (-0.1,-0.1);

\end{tikzpicture}
\]

If $\mu^\circ\cup\ka = \mu \cap \nu \cap \tilde\ka$,
then $\eta$ in $(X)$ must be equal to $\mu\ci\cup\ka$.
Hence we have
\begin{align*}
b_{\nu} &=
	\hspace{-10mm}
	\sum_{\substack{ \ka \text{ s.t.} \\ 
			\text{(e): }\core(\nu)/\ka\text{ : h.s. }\\
		\text{(f): }|\nu/\ka|\le a \\
		\text{(0): }\mu\ci\subset\nu \\
		(1''),(2'): \ka = (\mu\ci\sm\text{(some rem.\ cor.\ of $\mu\ci$)}) \sqcup (\nu\cap(\mu/\mu\ci))
}}
\hspace{-10mm}
(-1)^{|\mu^\circ\cup\ka/\ka|} \binom{d+a-|\nu/\ka|}{e}.
\end{align*}
%%%%%%%%%%%%%%%%%%%%%%%%%%%

	Here, the conditions $(1'')$ and $(2')$ mean that 
	the choices of $\ka$ correspond bijectively to the choices of $S\subset\{\mu\ci\text{-removable corners}\}$
	by
	$\ka=(\mu\ci\sm S) \sqcup (\nu\cap(\mu/\mu\ci)) = (\nu\cap\mu)\sm S$.

	Then we have
	% \begin{align*}
		% \nu/\ka 
		% &= \left((\nu\sm\mu)/(\ka\sm\mu)\right) \sqcup \left((\nu\cap\mu)/(\ka\cap\mu)\right) \\
		% &= (\nu\sm\mu) \sqcup ((\nu\cap\mu)/\ka) \qquad \text{(since $\ka\subset\mu$)} \\
		% &= (\nu\sm\mu) \sqcup S. \qquad\qquad \text{(since (0) and (2') imply $(\nu\cap\mu)/\ka=S$)}
	% \end{align*}
	\[
		\nu/\ka 
		= \nu / \left((\nu\cap\mu)\sm S\right)
		= (\nu\sm\mu) \sqcup S
	\]
	since $S\subset \nu\cap\mu$ by $(0)$.

	Hence (f) is equivalent to $|S| + |\nu\sm\mu| \le a$.

	Moreover,
	since $\core(\nu)_i=\nu_i$ for any $i\ge 2$,
	the condition (e) is transformed as follows:
	\begin{align*}
		\text{(e): }\core(\nu)/\ka\text{: h.s.}
		&\iff \nu/\ka\text{: h.s.}\\
		&\iff \begin{cases}
		\nu\setminus\mu\text{: h.s.} \qquad \text{and}\\
%\mu\ci/(\ka\cap\mu\ci)\subset \{\,\text{$\nu$-nonblocked $\mu\ci$-removable corner}\,\}
		%S\subset \{\,\text{$\nu$-nonblocked $\mu\ci$-removable corner}\,\}
		\text{every element of $S$ is $\nu$-nonblocked.}
		\end{cases}
	\end{align*}
%%%%%%%%%%%%%%%%%%%%%%%%%%%

	As a result, letting $x$ be a variable corresponding to $|S|$, 
	\begin{align*}
		b_{\nu}
		&= \de\left[\substack{
		\nu\sm\mu\text{: h.s.}\\
		\text{(0):} \mu\ci\subset\nu
}\right	]
	\sum_{\substack{0\le x\le r_{\nu\mu\ci} \\ \text{(f):} x\le a-|\nu\sm\mu| }}
		(-1)^x \binom{d+a-|\nu\sm\mu|-x}{e} \binom{r_{\nu\mu\ci}}{x}.
	\end{align*}

	If, in addition, $d=e\ge0$, %then we can drop the condition $|\nu\sm\mu|+x\le a$
	%since $\displaystyle\binom{d+a-|\nu\sm\mu|-x}{e}=0$ when $|\nu\sm\mu|+x > a$.
	%Thus 
	we can obtain
	\[
		b_\nu=\de\left[\substack{\mu^\circ \subset \nu \\ \nu\setminus\mu\text{: h.s.}}\right]
		\de\left[a\ge |\nu\sm\mu|\right]
		\binom{d+a-|\nu\sm\mu|-r_{\nu\mu^\circ}}{a-|\nu\setminus\mu|}
	\]
	%by applying Lemma \ref{binom_lem1}:
	by the following argument and the fact $r_{\nu\mu\ci}\ge 0$:
	in general for $d\in\mathbb{Z}_{\ge0}$ and $f,r\in \mathbb{Z}$,
%	since $\binom{d+f-x}{d}\binom{r}{x}=0$ when $x>\min(r,f)$,
	\begin{align*}
		&\sum_{0\le x\le \min(r,f)}(-1)^x\binom{d+f-x}{d}\binom{r}{x} \\
		&\quad= \de\left[r,f\ge 0\right] \sum_{0\le x\le \min(r,f)}(-1)^x\binom{d+f-x}{d}\binom{r}{x} \\
		&\quad= \de\left[r,f\ge 0\right] \sum_{0\le x\le \min(r,f)}(-1)^f \binom{-d-1}{f-x}\binom{r}{x} \\
		&\quad= \de\left[r,f\ge 0\right] \sum_{0\le x\le f}(-1)^f \binom{-d-1}{f-x}\binom{r}{x} \\
		&\quad= \de\left[r,f\ge 0\right] (-1)^f \binom{r-d-1}{f} \\
		&\quad= \de\left[r,f\ge 0\right] \binom{-r+d+f}{f}.
	\end{align*}

	{\it Now we have proved the lemma in Case 1.}

\vspace{2mm}
\noindent\underline{\it Case 2: $\nu_1>k+1-\bl$}

Similar to the above case, 
we shall find conditions on $\ka$
%such that
for which it holds that
\[
	\big((X)=\big)\sum_{\substack{ \eta \text{ s.t.}\\ 
			\text{(g): }\mu^\circ \cup \ka \subset \eta \subset \mu \cap \nu \cap \tilde\ka \\
			\text{(b): }\core(\nu)/\eta \text{: w.s.}
}}
(-1)^{|\eta/\ka|} \binom{d+a-|\nu/\ka|}{e}
\neq 0
\]
together with (e)($\core(\nu)/\ka$ :horizontal strip) and (f)($|\nu/\ka|\le a$).

%Assume $\core(\nu)/\ka$ :h.s. and $|\nu/\ka|\le a$.
Hereafter we assume (e) and (f).

Since $\eta\subset\mu\subset\Rbl$ and $\nu/\eta$ is a horizontal strip by (e),
it should hold that
$\nu\subset (k)\cup \Rbl$.

%Put $\nu_1 = k+1-l+u$.
Put $u = \nu_1 - (k+1-\bl)$.
Then we have
\begin{align*}
\core(\nu) &= (\nu_1+\nu_{\bl+1-u},\nu_2,\dots,\nu_{\bl+1}), \\
(\kconj{\nu})' &= (\nu_1+\nu_{\bl+1-u},\nu_2,\dots,\breve\nu_{\bl+1-u},\dots,\nu_{\bl+1})
\end{align*}
% by Lemma \ref{core_tila_i}.
% by Lemma \ref{core_j}.
by \cite[Lemma 1]{Takigiku_part1}.
Hence
\begin{align*}
	\text{$\core(\nu)/\eta$ : w.s.}
&\iff 
\begin{cases}
\text{$\nu/\eta$ :h.s.\quad and} \\
\text{$(\kconj{\nu})'/\eta$ : h.s.}
\end{cases} \\
&\iff
\begin{cases}
	\nu_1\ge\eta_1\ge\dots\ge\nu_\bl\ge\eta_\bl\ge\nu_{\bl+1} \quad \text{and} \\
\nu_1+\nu_{\bl+1-u}\ge\eta_1\ge\nu_2\ge\dots \\
\qquad \ge\eta_{\bl-u}\ge\nu_{\bl+2-u}\ge\eta_{\bl+1-u}\ge\dots\ge\nu_{\bl+1}\ge\eta_\bl,
\end{cases} \\
&\iff
\begin{cases}
\text{$\nu/\eta$ : h.s.}, \\
\eta_{\bl+1-u}=\nu_{\bl+2-u}, \\
\quad\vdots \\
\eta_{\bl}=\nu_{\bl+1}.
\end{cases}
\end{align*}

Hence we have 
\begin{align*}
&\begin{cases}
	\text{(g): $\mu^\circ \cup \ka \subset \eta \subset \mu \cap \nu \cap \tilde\ka$}, \\
	\text{(b): $\core(\nu)/\eta:\text{w.s.}$ }
\end{cases} \\
&\quad\iff
\begin{cases}
	\nu/\eta:\text{h.s.}, \\
	(\mu\ci\cup\ka)_1 \le \eta_1 \le (\mu\cap\nu\cap\ti\ka)_1, \\
	\quad \vdots \\
	(\mu\ci\cup\ka)_{\bl-u} \le \eta_{\bl-u} \le (\mu\cap\nu\cap\ti\ka)_{\bl-u}, \\
	(\mu\ci\cup\ka)_{\bl-u+1} \le \eta_{\bl-u+1}=\nu_{\bl-u+2} \le (\mu\cap\nu\cap\ti\ka)_{\bl-u+1},\\
	\quad \vdots \\
	(\mu\ci\cup\ka)_{\bl} \le \eta_{\bl}=\nu_{\bl+1} \le (\mu\cap\nu\cap\ti\ka)_{\bl}.
\end{cases}
\end{align*}
Similarly to Case 1,
the condition ``$\nu/\eta$ : horizontal strip'' can be dropped under the conditions (g),(e),
and thus we have
\begin{align*}
(X)\neq 0
&\Lra (Y):
\begin{cases}
(\mu\ci\cup\ka)_1 = (\mu\cap\nu\cap\ti\ka)_1, \\
\quad\vdots \\
(\mu\ci\cup\ka)_{\bl-u} = (\mu\cap\nu\cap\ti\ka)_{\bl-u}, \\
(\mu\ci\cup\ka)_{\bl-u+1} \le \nu_{\bl-u+2} \le (\mu\cap\nu\cap\ti\ka)_{\bl-u+1}, \\
\quad\vdots \\
(\mu\ci\cup\ka)_{\bl} \le \nu_{\bl+1} \le (\mu\cap\nu\cap\ti\ka)_{\bl}.
\end{cases}
\end{align*}

\vspace{2mm}
\noindent\underline{\it Case 2-1: $l(\mu\ci)<\bl+1-u$}

\vspace{2mm}
We have
\begin{align*}
(Y)&\iff
	\begin{cases}
		(1): \max(\mu_j,\ka_j) = \min(\mu_j,\nu_j,\ti\ka_j) & (1\le j\le l(\mu\ci)), \\
		(2): \ka_j = \min(\mu_j,\nu_j,\ti\ka_j) & (l(\mu\ci) < j\le \bl-u), \\
		(3): \ka_j \le \nu_{j+1} \le \min(\mu_j,\nu_j,\ti\ka_j) & (j\ge \bl-u+1),
	\end{cases}\\
	&\iff
	\begin{cases}
		(1'): \ka_j \le \mu_j \le \nu_j,\ti\ka_j & (1\le j\le l(\mu\ci)), \\
		(2'): \ka_j = \min(\mu_j,\nu_j) & (l(\mu\ci) < j\le \bl-u), \\
		(3'): \ka_j = \nu_{j+1} \le \mu_j & (j\ge \bl-u+1),
	\end{cases}\\
&\iff
	\begin{cases}
		(0): \mu\ci\subset\nu, \\
		(1''): \ka_{\le l(\mu\ci)} = \mu\ci\setminus(\text{some rem.\ cor.\ of $\mu\ci$}), \\
		(2'): \ka_j = \min(\mu_j,\nu_j) & (l(\mu\ci) < j\le \bl-u), \\
		(3''): \ka_j = \nu_{j+1} & (j\ge \bl-u+1), \\
		(4): \nu_{j+1} \le \mu_j & (j\ge \bl-u+1).
	\end{cases}
\end{align*}
Here, 

\underline{$(1)\iff(1')\iff(0),(1'')$},
\underline{$(2)\iff(2')$} : by the same argument as Case 1.

\underline{$(3)\iff(3')$:} since $\nu/\ka$ is a horizontal strip, we have $\ka_j\ge\nu_{j+1}\,(\forall j)$.
Hence $(3)\Lra \ka_j=\nu_{j+1}\,(\forall j\ge \bl+1-u)$.
%Since $\nu\setminus\mu$ :h.s., $\mu_j\ge \nu_{j+1}$ is a trivial condition.

\underline{$(3')\iff(3''),(4)$:} obvious.

\[
\begin{tikzpicture}[scale=0.25]
\draw (0,0) -| (17,3) -| (14,6) -| (11,9) -| (0,0);
\node at (6,4.5) {$\mu\ci$};
%\draw (0,9) rectangle (8,14);
\draw (8,9) -- (8,10) -| (6,12) -| (3,14) -| (0,9);
\draw [loosely dotted,thick] (3,14) -| (21,0);
\draw [loosely dotted,thick] (8,10) -| (8,15);

\node [left] at (0,14.5) {$\bl+1$};
\node [left] at (0,11.5) {$\bl+1-u$};
\draw (21,-0.1) to [out=-30,in=-150] node[below=1pt]{$u$} (24,-0.1);
\draw (0,-0.1) to[out=-12,in=-168] node[below=1pt]{$k+1-\bl$} (21,-0.1);
\draw (0,15) to[out=12,in=168] node[above=1pt]{$t$} (8,15);

\draw [dotted,thick] (0,11) -- (5,11);

\draw [red] (8,9.5) to [out=45, in=180] (11,12) node[right=1pt]{$\ka$};
\draw [blue] (4.1,12.5) to [out=45, in=180] (6,16) node[right=1pt]{$\nu$};
\draw (6, 12) to [out=45,in=180] (10,15) node[right=1pt]{$\mu$};

\draw [red] (0.1,0.1) -| (16.9,2.9) -| (13.9,5.9) -| (10.9,8.9) -| (7.9,9.95) -| (5.90,10.90) -| (3.99,11.9) -| (2.9,12.99) -| (1.99,13.9) -| (0.1,0.1);
\draw [red] (10,8) rectangle (10.9,8.9);
\draw [red] (16,2) rectangle (16.9,2.9);

\draw [blue,decorate,decoration={zigzag,segment length=1mm,amplitude=.2mm}] (-0.1,-0.1) -| (24,1) -| (17.1,3.1) -| (14.1,7.1) -| (11.1,9.1) -| (10.1,10.05) -| (7.05,11.05) -| (5.01,12.05) -| (4.01,13.01) -| (3.05,14.1) -| (2.01,15) -| (-0.1,-0.1);

\end{tikzpicture}
\]

\vspace{2mm}
If $(Y)$ holds, then $\eta$ in $(X)$ must satisfy
\begin{align*}
	\eta_i &= (\mu\ci\cup\ka)_i & (i &\le \bl-u), \\
	\eta_i &= \nu_{i+1} = \ka_i & (i &\ge \bl-u+1).
\end{align*}
Hence we have
\begin{align*}
b_\nu&=\sum_{\substack{
	\ka \text{ s.t.}\\ 
	\text{(e): }\core(\nu)/\ka\text{ : h.s.} \\
	\text{(f): }|\nu/\ka|\le a \\
	(0),(1''),(2'),(3''),(4)
	}}
	(-1)^{|\mu\ci\setminus\ka|}\binom{d+a-|\nu/\ka|}{e}.
\end{align*}

	Similarly to Case 1, 
	the conditions $(1'')$, $(2')$, $(3'')$ mean that
	the choices of $\ka$ correspond bijectively to the choices of $S\subset\{\mu\ci\text{-removable corners}\}$
	by $\ka_{\le \bl-u}=(\nu\cap\mu)_{\le \bl-u}\sm S$ and $(\ka_{\bl-u+1},\ka_{\bl-u+2},\dots)=(\nu_{\bl-u+2},\nu_{\bl-u+3},\dots)$.
	
	Hence, we have
	\begin{align*}
		\nu_{\le \bl-u}/\ka_{\le \bl-u} &= \nu_{\le \bl-u} / ((\nu\cap\mu)_{\le \bl-u}\sm S) = (\nu_{\le \bl-u} \sm\mu)\sqcup S, \\
		\nu_{>\bl-u}/\ka_{>\bl-u} &= \left\{(\nu'_j,j) \middle|\ 1\le j\le \nu_{\bl-u+1}\right\},
	\end{align*}
	thus
	\[
		\nu/\ka = \left\{(\nu'_j,j) \middle|\ 1\le j\le \nu_{\bl-u+1}\right\} \sqcup (\nu_{\le \bl-u}\sm\mu) \sqcup S.
	\]
	% \begin{align*}
		% \nu/\ka 
		% &= \left(\nu_{>l-u}/\ka_{>l-u}\right) \sqcup \left(\nu_{\le l-u}/\ka_{\le l-u}\right) \\
		% &= \left(\nu_{>l-u}/\ka_{>l-u}\right) \sqcup \left((\nu_{\le l-u}\sm\mu)/(\ka_{\le l-u}\sm\mu)\right)
						      % \sqcup \left((\nu_{\le l-u}\cap\mu)/(\ka_{\le l-u}\cap\mu)\right) \\
		% &= \left(\nu_{>l-u}/\ka_{>l-u}\right) \sqcup (\nu_{\le l-u}\sm\mu) \sqcup S
						      % \qquad \text{(by the same argument as Case 1)} \\
		% &= \left\{(\nu'_j,j) \middle|\ 1\le j\le \nu_{l-u+1}\right\} \sqcup (\nu_{\le l-u}\sm\mu) \sqcup S
						      % \qquad\text{(by $(3'')$)}.
	% \end{align*}

	Hence (f) is equivalent to $\nu_{\bl-u+1} + |\nu_{\le \bl-u}\sm\mu| + |S| \le a$.

	Moreover,
	the condition (e) is transformed as
	\begin{align*}
		\text{(e): $\core(\nu)/\ka$ : h.s.}
		&\iff \text{$\nu/\ka$ : h.s.}\\
		&\iff \begin{cases}
			(\nu_{>\bl-u}/\ka_{>\bl-u}) \sqcup (\nu_{\le \bl-u}\sm\mu)\text{ : h.s.\quad and} \\
			\text{every element of $S$ is $\nu$-nonblocked}
		\end{cases}\\
		&\iff \begin{cases}
		\mu_{\bl-u} \ge \nu_{\bl-u+1} \quad \text{and} \\
			\nu_{\le \bl-u}\sm\mu\text{ : h.s.\quad and} \\
			\text{every element of $S$ is $\nu$-nonblocked}.
		\end{cases}
	\end{align*}
	Thus we have 
	\begin{align*}
		\text{(e), (4)}
		&\iff \begin{cases}
			\nu\sm\mu\text{ : h.s.\quad and} \\
			\text{every element of $S$ is $\nu$-nonblocked}.
		\end{cases}
	\end{align*}

	As a result, letting $x$ be a variable corresponding to $|S|$, 
%	{\huge \bf h.s. ni setumei hituyou}
	\begin{align*}
		b_{\nu}
		&= \de\left[\substack{
		\nu\sm\mu\text{ : h.s.}\\
		\text{(0): }\mu\ci\subset\nu \\
		%\text{$\nu_{j+1} \le \mu_j$ for $j\ge l-u+1$}
}\right] \\
		&\phantom{=}\quad\times\hspace{-5mm}\sum_{\substack{0\le x\le r_{\nu\mu\ci} \\ \text{(f): } \nu_{\bl-u+1}+|\nu_{\le \bl-u}\sm\mu|+x\le a}}
		\hspace{-5mm}
		(-1)^x \binom{d+a-(\nu_{\bl-u+1} + |\nu_{\le \bl-u}\sm\mu|+x)}{e} \binom{r_{\nu\mu\ci}}{x}.
	\end{align*}

	The remaining equality (of the case $d=e\in\mathbb{Z}_{\ge 0}$) can be proved in the same way as Case 1.

\vspace{2mm}
\noindent\underline{\it Case 2-2: $l(\mu\ci)\ge \bl+1-u$}

\vspace{2mm}
We have
\begin{align*}
(Y)&\iff
\begin{cases}
	(1): \max(\mu_j,\ka_j) = \min(\mu_j,\nu_j,\ti\ka_j) & (1\le j\le \bl-u), \\
	(2): \max(\mu_j,\ka_j) \le \nu_{j+1} \le \min(\mu_j,\nu_j,\ti\ka_j) & (\bl-u+1 \le j\le l(\mu\ci)), \\
	(3): \ka_j \le \nu_{j+1} \le \min(\mu_j,\nu_j,\ti\ka_j) & (j\ge l(\mu\ci)+1)
\end{cases}\\
&\iff
\begin{cases}
	(1'): \ka_j \le \mu_j \le \nu_j,\ti\ka_j & (1\le j\le \bl-u),\\
	(2'): \ka_j = \nu_{j+1}=\mu_j & (\bl-u+1 \le j\le l(\mu\ci)), \\
	(3'): \ka_j = \nu_{j+1} \le \mu_j & (j\ge l(\mu\ci)+1)
\end{cases}\\
&\iff
\begin{cases}
(0): \mu\ci\subset\nu, \\
(1''): \ka_{\le \bl-u} = (\mu\ci)_{\le \bl-u}\setminus(\text{some rem.\ cor.\ of $(\mu\ci)_{\le \bl-u}$}), \\
(2''): \nu_{j+1}=\mu_j & \hspace{-3cm} (\bl-u+1 \le j\le l(\mu\ci)), \\
(4): \nu_{j+1} \le \mu_j & \hspace{-3cm} (j > l(\mu\ci)), \\
(3''): \ka_j = \nu_{j+1} & \hspace{-3cm} (j\ge \bl-u+1).
\end{cases}
\end{align*}

Here, 

\underline{$(1)\iff(1')$}: obvious. 

\underline{$(2)\iff(2')$}: 
Since $\nu/\ka$ is a horizontal strip, we have $\ka_j\ge\nu_{j+1}\ (\forall j)$. Hence 
$(2)\iff \mu_j\le \ka_j=\nu_{j+1}\le \nu_j,\ti\ka_j,\mu_j \iff \ka_j=\nu_{j+1}=\mu_j$.

\underline{$(3)\iff(3')$}: same as Case 2-1.

\underline{$(2'),(3') \iff (2''),(3''),(4)$}: obvious.

\underline{$(1'),(2'') \iff (0),(1''),(2'')$}: obvious.
% \underline{$(1') \iff (0),(1'')$}: obvious.

% \input{fig/same-2.tex}
\[
\begin{tikzpicture}[scale=0.25]
\draw (0,0) -| (17,3) -| (14,6) -| (13,7) -| (12,8) -| (10,9) -| (0,0);
\node at (6,4.5) {$\mu\ci$};
%\draw (0,9) rectangle (8,14);
\draw (8,9) -- (8,10) -| (5,13) -| (2,14) -| (0,9);
\draw [loosely dotted,thick] (2,14) -| (21,0);
\draw [loosely dotted,thick] (8,10) -| (8,15);

\node [left] at (0,14.5) {$\bl+1$};
\node [left] at (0,6.5) {$\bl+1-u$};
\draw (21,-0.05) to [out=-30,in=-150] node[below=1pt]{$u$} (29,-0.05);
\draw (0,-0.05) to[out=-12,in=-168] node[below=1pt]{$k+1-\bl$} (21,-0.05);
\draw (0,15) to[out=12,in=168] node[above=1pt]{$t$} (8,15);

\draw [dotted,thick] (0,6) -- (13,6);

\draw [red] (6,9.5) to [out=45, in=180] (10,12) node[right=1pt]{$\ka$};
\draw [blue] (4.1,12.5) to [out=45, in=180] (6,16) node[right=1pt]{$\nu$};
\draw (5, 12.5) to [out=45,in=180] (10,15) node[right=1pt]{$\mu$};

\draw [red] (0.05,0.05) -| (16.90,2.90) -| (13.90, 5.90) -| (12.90, 6.90) -| (11.90, 7.90) -| (9.90,8.90) -| (5.99,9.93) -| (4.93,10.95) -| (3.99,11.99) -| (2.99,12.93) -| (1.93,13.9) -| (0.05,0.05);
\draw [red] (13,5) rectangle (13.9,5.9);
\draw [red] (16,2) rectangle (16.95,2.95);

\draw [blue,decorate,decoration={zigzag,segment length=1mm,amplitude=.2mm}] (-0.05,-0.05) -| (29,1) -| (17.05,3.05) -| (16, 4) -| (14.05,6.05) -| (13.05,8) -| (12, 9) -| (10,10) -| (6.01,11.01) -| (5.07,12.01) -| (4.01,13.07) -| (3.01,14.05) -| (2.01,15) -| (-0.05,-0.05);

\end{tikzpicture}
\]

\vspace{2mm}
Hence we have
\begin{align*}
b_\nu&=\sum_{\substack{
	\ka\text{ s.t.}\\ 
	\text{(e): }\core(\nu)/\ka\text{ :h.s.} \\
	\text{(f): }|\nu/\ka|\le a \\
	(0),(1''),(2''),(3''),(4)
	}}
	(-1)^{|\mu\ci\setminus\ka|}\binom{d+a-|\nu/\ka|}{e}.
\end{align*}

	Similarly to Case 1, 
	the conditions $(1'')$ and $(3'')$ mean that
	the choices of $\ka$ correspond bijectively to the choices of $S\subset\{\mu_{\le \bl-u}\text{-removable corners}\}$
%	choosing a $\ka$ is equivalent to choosing a subset $S$ of
%	the set of all $\mu_{\le l-u}$-removable corners.
	by $\ka_{\le \bl-u}=\mu_{\le \bl-u}\sm S$ and $(\ka_{\bl-u+1},\ka_{\bl-u+2},\dots,)=(\nu_{\bl-u+2},\nu_{\bl-u+3},\dots)$.
	
	Furthermore, by the same way as Case 2-1, we have
	\begin{align*}
		\nu/\ka 
		&= \left\{(\nu'_j,j) \middle|\ 1\le j\le \nu_{\bl-u+1}\right\} \sqcup (\nu_{\le \bl-u}\sm\mu) \sqcup S.
	\end{align*}

	Hence (f) is equivalent to $\nu_{\bl-u+1} + |\nu_{\le \bl-u}\sm\mu| + |S| \le a$.

	Moreover,
	the condition (e) is transformed as
	\begin{align*}
		\text{(e): } \core(\nu)/\ka \text{ : h.s.}
		&\iff \text{$\nu/\ka$ : h.s.}\\
		&\iff \begin{cases}
			\mu_{\bl-u} \ge \nu_{\bl-u+1}, \\
			\nu_{\le \bl-u}\sm\mu\text{ : h.s.}, \\
			\text{every element of $S$ is $\nu$-nonblocked},
		\end{cases}
	\end{align*}
	by a similar argument to Case 2-1 and we have
	\begin{align*}
		\text{(e)}, (2''), (4)
		&\iff \begin{cases}
			(2''), \\
			\nu\sm\mu\text{ :h.s.}, \\
			\text{every element of $S$ is $\nu$-nonblocked}.
		\end{cases}
	\end{align*}

	As a result, letting $x$ be a variable corresponding to $|S|$, we have
	\begin{align*}
		b_{\nu}
		&= \de\left[\substack{
		\text{$\nu\sm\mu$ :h.s.}\\
		\text{(0): }\mu\ci\subset\nu \\
		(2'')
		%\text{$\nu_{j+1} \le \mu_j$ for $j\ge l-u+1$}
}\right] \\
&\phantom{=}\quad\times\hspace{-5mm}\sum_{\substack{0\le x\le r_{\nu\mu\ci} \\ \nu_{\bl-u+1}+|\nu_{\le \bl-u}\sm\mu|+x\le a}}\hspace{-5mm}
		(-1)^x \binom{d+a-(\nu_{\bl-u+1} + |\nu_{\le \bl-u}\sm\mu|+x)}{e} \binom{r_{\nu\mu\ci}}{x}.
	\end{align*}

	The remaining equality (of the case $d=e\in\mathbb{Z}_{\ge 0}$) can be proved in the same way as Case 1.

	\vspace{2mm}
	{\it Now we have completed the proof of Lemma \ref{prop:StepA_Goal}.}
\end{proof}

\subsection{Step (B)}
%Hence we have
%Continuously to the Step (A), 
As in Step (A),
we deal with a slightly more general situation that
we only assume $\bar\la\subset \Rbl$ where $\bl=l(\bar\la)$,
dropping the assumption $\bar\la_\bl\ge t$.

% Also in this situation the calculation of the RHS of (\ref{Goal})
% which we are going to do works,
% although, the equation (\ref{Goal}) itself in this general situation
% needs more arguments to be checked.
% I am confident that it is true when $l(\bar\la)\le k+1-t$, but is not always true when $l(\bar\la)> k+1-t$.

\vspace{2mm}
Notice that
$q_{\mu\bar\la}-1+\de\left[\bar\la'_t=\mu'_{t+1}\right]\ge q_{\mu\bar\la}-1=|\mu/\bar\la|+r_{\mu'\bar\la'}-1\ge 0$,
since if $|\mu/\bar\la|=0$ then $\mu=\bar\la$ thus $r_{\mu'\bar\la'}=r_{\bar\la\bar\la}>0$.
%		{\huge \bf $>0$ janaito komaru no deha?}

Substituting the result of Step (A) for the RHS of (\ref{Goal}),
if we write
%$\displaystyle\frac{\kks{R_t\cup\la}}{\kks{R_t}}$
%as linear combination of $\{\kks{\nu}\mid\nu\in\Pk\}$:
%$\displaystyle\frac{\kks{R_t\cup\la}}{\kks{R_t}}
%=\sum_{\nu}a_{\nu}\kks{\nu}$,
$\kks{R_t\cup\la}\Big/\kks{R_t}
=\sum_{\nu}a_{\nu}\kks{\nu}$,
then the coefficient $a_\nu$ is as follows:

\vspace{2mm}
\noindent\underline{\it Case 1: if $\nu_1\le k+1-\bl$,}
\begin{equation} \label{StepB_Goal}
a_\nu
=\sum_{\substack{\mu \text{ s.t.}\\ \mu\subset\Rbl \\ \mu/\bar\la:\text{ v.s.} \\ \mu^\circ\subset\nu \\ \nu\setminus\mu:\text{ h.s.}}}
f(\mu),
\end{equation}
where we put
\begin{align*}
	f(\mu) &= 
	(-1)^{|\mu/\bar\la|} 
%	\de\left[b\ge|\nu\sm\mu|+|\mu/\bar\la|\right] \\
	\DE{f_2(\mu)\ge 0}
%	&\quad \times
	\binom{f_1(\mu)} {f_2(\mu)},\\
	f_1(\mu) &= q_{\mu\bar\la}-1+\de\left[\bar\la'_t=\mu'_{t+1}\right] + v-|\nu\sm\mu|-|\mu/\bar\la|-r_{\nu\mu^\circ}, \\
	f_2(\mu) &= v-|\nu\sm\mu|-|\mu/\bar\la|.
\end{align*}

%%%%%%%%%%%%%%%%%%%%%%%%%%%%%%%%%%%%%%%%%%%%%%%%%%%
\vspace{2mm}

\noindent\underline{\it Case 2: if $\nu_1> k+1-\bl$,}

Recall the notations $u=\nu_1-(k+1-\bl)$
%, $y=\nu_{l+1-u}-\nu_{l+1}$ and $z=t-\nu_{l+1}$.
and $A=\nu_{\bl-u+1} + |\nu_{\le \bl-u}\sm\mu|$.

Then similarly to Case 1, we have
$$a_\nu = X + Y,$$
where
\begin{align*}
X
&=\sum_{\substack{\mu\text{ s.t.}\\ \mu\subset\Rbl \\ \mu/\bar\la:\text{ v.s.} \\ l(\mu\ci)<\bl+1-u}}
% (-1)^{|\mu/\bar\la|}
 \de\left[\substack{\mu^\circ\subset\nu \\ \nu\setminus\mu:\text{ h.s.}}\right]
 g(\mu)
 % \de\left[b-|\mu/\bar\la| \ge A\right] \\
% &\phantom{=}
 % \times 
 % (-1)^{|\mu/\bar\la|} 
 % \binom{q_{\mu\bar\la}-1+\de\left[\bar\la'_t=\mu'_{t+1}\right] + b-|\mu/\bar\la|-A-r_{\nu\mu\ci}} {b-|\mu/\bar\la|-A}
\end{align*}
%
% \sum_{\substack{x \\ 0\le x \le r_{\nu\mu^\circ}\\ |\nu\setminus\mu|+y+x\le b-|\mu/\bar\la|}}
% (-1)^x \binom{q_{\mu\bar\la}-1+\de_{\mu_l\neq t} + b-|\mu/\bar\la|-|\nu\setminus\mu|-y-x} {q_{\mu\bar\la}-1+\de_{\mu_l\neq t}} 
% \binom{r_{\nu\mu^\circ}}{x} \\
%&=\de_{b\ge|\nu\setminus\bar\la|+y} 
% \sum_{\substack{\mu\\ \mu\subset\Rl \\ \mu/\bar\la:v.s. \\ \mu^\circ\subset\nu \\ \nu\setminus\mu:h.s. \\ l(\mu\ci)<l+1-u}}
% (-1)^{|\mu/\bar\la|} 
% \binom{q_{\mu\bar\la}-1+\de_{\mu_l\neq t} + b-|\nu\setminus\bar\la|-y-r_{\nu\mu^\circ}} {b-|\nu\setminus\bar\la|-y} \\
%\intertext{
and
\begin{align*}
Y&=
	\sum_{\substack{\mu\text{ s.t.}\\ \mu\subset\Rbl \\ \mu/\bar\la:\text{ v.s.} \\ l(\mu\ci)\ge \bl+1-u}}
%	(-1)^{|\mu/\bar\la|}
	\de\left[\substack{\mu^\circ\subset\nu \\ 
			\nu\setminus\mu:\text{ h.s.} \\ 
	(P)
	%\mu_j=\nu_{j+1}\quad(l+1-u\le\forall j\le l(\mu\ci))
	}\right]
	g(\mu),
	% \de\left[b-|\mu/\bar\la| \ge A\right] \\
	% &\phantom{=}
	% \times 
	% (-1)^{|\mu/\bar\la|} 
	% \binom{q_{\mu\bar\la}-1+\de\left[\bar\la'_t=\mu'_{t+1}\right] + b-|\mu/\bar\la|-A-r_{\nu\mu\ci}} {b-|\mu/\bar\la|-A}.
%&\phantom{=}\times \sum_{\substack{x \\ 0\le x \le r_{\nu\mu^\circ}\\ |\nu\setminus\mu|+z+x\le b-|\mu/\bar\la|}}
% (-1)^x \binom{q_{\mu\bar\la}-1+\de_{\mu_l\neq t} + b-|\mu/\bar\la|-|\nu\setminus\mu|-z-x} {q_{\mu\bar\la}-1+\de_{\mu_l\neq t}} 
% \binom{r_{\nu\mu^\circ}}{x} \\
\end{align*}
where we put
\begin{align*}
	g(\mu) &= 
	(-1)^{|\mu/\bar\la|} 
%	\de\left[b\ge|\nu\sm\mu|+|\mu/\bar\la|\right] \\
	\DE{g_2(\mu)\ge 0}
%	&\quad \times
	\binom{g_1(\mu)} {g_2(\mu)},\\
	g_1(\mu) &= q_{\mu\bar\la}-1+\de\left[\bar\la'_t=\mu'_{t+1}\right] + v-|\mu/\bar\la|-A-r_{\nu\mu^\circ}, \\
	g_2(\mu) &= v-|\mu/\bar\la|-A.
\end{align*}

In fact $Y=0$,
since
$A \ge \nu_{\bl-u+1} \ge \mu_{\bl-u+1} > t \ge v$.
% $b \le t < \mu_{\bl-u+1} \le \nu_{\bl-u+1} \le A$.

	Moreover, in fact the condition ``$l(\mu\ci)<\bl+1-u$'' in the summation in $X$ can be dropped
	since if $\mu$ satisfies $l(\mu\ci)\ge \bl+1-u$ then
%	$y=\nu_{l+1-u}-\nu_{l+1}\ge \mu_{l+1-u}-\nu_{l+1}>t-\nu_{l+1}$,
	$A \ge \nu_{\bl-u+1} \ge \mu_{\bl-u+1}>t\ge v$.
	Hence we have

\begin{equation} \label{StepB_Goal2}
a_\nu
= X
=\sum_{\substack{\mu\text{ s.t.}\\ \mu\subset\Rbl \\ \mu/\bar\la:\text{ v.s.} \\ \mu^\circ\subset\nu \\ \nu\setminus\mu:\text{ h.s.}}}
g(\mu).
\end{equation}
% where we put
% \begin{align*}
	% g(\mu) &=
	% (-1)^{|\mu/\bar\la|} 
	% \de\left[b\ge |\mu/\bar\la| + A\right] \\
	% &\quad \times
	% \binom{q_{\mu\bar\la}-1+\de\left[\bar\la'_t=\mu'_{t+1}\right] + b-|\mu/\bar\la|-A-r_{\nu\mu^\circ}}
	% {b-|\mu/\bar\la|-A}.
% \end{align*}

%%%%%%%%%%%%%%%%%%%%%%%%%%%%%%%%%%%%%%%%%%%%%%%%%

\vspace{2mm}
% To complete this calculation of $a_\nu$, 
To complete these calculations of (\ref{StepB_Goal}) and (\ref{StepB_Goal2}), 
first we simplify the conditions on $\mu$ in the above summations.
%we consider more about conditions on $\mu$ in the above summation.

First, we can easily see some necessary conditions to $a_\nu \neq 0$.
In both cases,
\begin{itemize}
\item
$\nu$ should be contained by $(k)\cup\Rbl$ since $\mu\subset\Rbl$ and $\nu\sm\mu$ is a horizontal strip.
\item
The skew shape $\nu\sm\bar\la\,\big(\subset(\nu\setminus\mu)\sqcup(\mu/\bar\la)\big)$ should be a ribbon 
since a union of a horizontal strip and a vertical strip 
never contains a $2\times 2$ square.
Otherwise, if $\nu\sm\bar\la$ is not a ribbon, this coefficient $a_\nu$ is equal to $0$.
\item
Moreover, 
unless $\bar\la\ci \subset \nu$,
there are no $\mu$ such that 
$\bar\la\subset \mu$ and $\mu^\circ \subset \nu$,
hence $a_\nu=0$.
\item
	If $\nu_l > v (= \la_l)$, then
	$f_2(\mu) \le v - |\nu\sm\mu| \le v - |\nu\sm\Rbl| \le v - \nu_l < 0$
	and
	$g_2(\mu) \le v - (\nu_{\bl-u+1} + |\nu_{\le \bl-u}\sm\mu|) \le v - \nu_{\bl-u+1} \le v - \nu_l < 0$
	for any $\mu\subset\Rbl$,
	thus $a_\nu=0$.
\end{itemize}
%{\it Notation}: For a partition $\ga=(\ga_1,\ldots,)$, we write $\ga_{\le l} = (\ga_1,\ldots,\ga_l)$.

Now we assume 
\begin{eqnarray}
&\nu \subset (k)\cup\Rbl, \label{eq:same_assump6}\\
&\bar\la\ci\subset\nu, \label{eq:same_assump5}\\
&\text{$\nu\setminus\bar\la$ is a ribbon.} \label{eq:same_assump9}\\
&\nu_l \le v = \la_l. \label{eq:same_assump7}
\end{eqnarray}

We write 
%$(\nu\cap\Rl)\sm\bar\la = A_1 \sqcup \cdots \sqcup A_b \sqcup \dots \sqcup A_a$
$(\nu\cap\Rbl)\sm\bar\la = A_1 \sqcup \cdots \sqcup A_a$
so that each $A_i$ is a connected ribbon.
%and $A_1$ is the most northwest one.

%\[ \input{fig/same-3.tex} \]

We put
\begin{align*}
	X_i&=\{\,(r,c)\in A_i\mid (r+1,c)\in A_i \,\}, \\
	X'_i&=\{\,(r,c)\in A_i\mid (r,c-1)\in A_i \,\}, \\
	%\text{the set of cells of $A_i$ which are not top cell of their column}\\
	y_i&=(r_i,c_i) 
	   :=\text{the most northwest cell of $A_i$}, \\
	t_i&=\bar\la'_{c_{i}-1} - \nu'_{c_i} = \bar\la'_{c_{i}-1} - r_i (\ge 0).
\end{align*}

Then $A_i=X_i\sqcup X'_i \sqcup \{y_i\}$.

\[ 
\begin{tikzpicture}[scale=0.3]
\draw (0,6) rectangle (1,7);
\draw (0,3) rectangle (1,6);
\draw (1,3) rectangle (5,4);
\draw (4,0) rectangle (5,3);
\draw (5,0) rectangle (7,1);
\draw (0,7) |- (-3, 10) ;

\draw (0,7) to [out=70, in=-70] node[right=1pt]{$t_i$} (0,10);
\node at (-2,7.5) {\LARGE $\bar\la$};

\draw (0.5,6.5) to [out=170, in=45] (-1.5,5) node[left] {$y_i$};
\node (Xi) at (1,-1) {$X_i$};
\draw (0.5,4) to [out=-100, in=100] (Xi.north);
\draw (4.5,1) to [out=-100, in=30] (Xi.east);
\node (Xi') at (7,5) {$X'_i$};
\draw (4,3.5) to [out=45, in=180] (Xi'.west);
\draw (6,0.5) to [out=60, in=-90] (Xi'.south);

\draw (15,5) rectangle (16,6);
\draw (16,5) rectangle (19,6);
\draw (18,2) rectangle (19,5);
\draw (19,2) rectangle (22,3);
\draw (21,0) rectangle (22,2);

\draw (15,6) |- (12,9);

\draw (15,6) to [out=70, in=-70] node[right=1pt]{$t_i$} (15,9);
\node at (13,6.5) {\LARGE $\bar\la$};

\draw (15.5,5.5) to [out=-100, in=100] (15.5,4) node[below] {$y_i$};
\node (Xi2) at (18,0) {$X_i$};
\draw (18.5,3) to [out=-100, in=100] (Xi2.north);
\draw (21.5,1) to [out=190, in=0] (Xi2.east);
\node (Xi'2) at (22,5) {$X'_i$};
\draw (18,5.5) to [out=30, in=150] (Xi'2.west);
\draw (21,2.5) to [out=60, in=-90] (Xi'2.south);
\end{tikzpicture}
\]

We can assume 
\[ c_1 < \dots <c_b \le t < c_{b+1} < \dots < c_a \]
for $0 \le \exists b \le a$, without loss of generality.

Moreover we put
\begin{align*}
	\{ d_1,\dots,d_e \} &= \{ c \mid 1 < c \le t, \ \nu'_c \le \bar\la'_c < \bar\la'_{c-1} \}, \\
	z_i &= \bar\la'_{d_i - 1} - \bar\la'_{d_i}.
\end{align*}

In other words, $d_1,\dots,d_e$ are the column indices not greater than $t$ in which column 
there is an addable corner of $\bar\la$ which does not belong to $\nu$,
and $z_i$ is the number of boxes which we can add on the $d_i$-th column of $\bar\la$.
(See the figure below)

\[ 
\begin{tikzpicture}[scale=0.2]

\draw (0,0) -| (36,5) -| (31,9) -| (23,13) -| (17,16) -| (12,20) -| (7,24) -| (4,29) -| (0,0);
\draw (0,9) -- (23,9);

\node at (17,4.5) {$\bar\la\ci$};
\node at (6,14) {$\bar\la/\bar\la\ci$};

\draw [loosely dotted, thick] (4,29) -| (40,0);
\draw [loosely dotted, thick] (23,13) -| (23,29);

\draw (4,24) to [out=70,in=-70] node[right=1pt]{$z_1$} (4,29);
\node [below=-10pt] at (4.5,24) {$\begin{array}{c}\vdots \\ d_1 \end{array}$};
\draw (12,16) to [out=70,in=-70] node[right=1pt]{$z_2$} (12,20);
\node [below=-10pt] at (12.5,16) {$\begin{array}{c}\vdots \\ d_2 \end{array}$};

\draw [red] (7.1,20.1) -| (10,21) -| (8,23) -| (7.1,20.1);
\draw [red] (23.1,9.1) -| (28,10) -| (24,14) -| (17.1,13.1) -| (23.1,9.1);
\draw [red] (31.1,5.1) -| (34,6) -| (32,8) -| (31.1,5.1);
\draw [red] (36.1,0) -| (40,1) -| (37,3) -| (36.1,0);
\node [red,below=-10pt] at (7.5,20) {$\begin{array}{c}\vdots \\ c_1 \end{array}$};
\node [red,below=-10pt] at (17.5,13) {$\begin{array}{c}\vdots \\ c_{b} \end{array}$};
\node [red,below=-10pt] at (31.5,5) {$\begin{array}{c}\vdots \\ c_{b+1} \end{array}$};
\node [red,below=-10pt] at (36.5,0) {$\begin{array}{c}\vdots \\ c_{a} \end{array}$};

\draw [red] (10,21) to [out=45,in=190] (14,24) node (A1)[right]{$A_1$};
\draw [red] (18,14) to [out=45,in=190] (22,17) node (Ab)[right]{$A_b$};
\draw [red, loosely dotted,thick] (A1.south east) -- (Ab.north west);

\draw [red] (32,6) to [out=45,in=190] (36,9) node (Ab1)[right]{$A_{b+1}$};
\draw [red] (39,1) to [out=45,in=190] (43,4) node (Aa)[right]{$A_a$};
\draw [red, loosely dotted,thick] (Ab1.south east) -- (Aa.north west);

\draw [blue,decorate,decoration={zigzag,segment length=1mm,amplitude=.2mm}] (-0.1,-0.1) -| (44,1.1) -| (37.1,3.1) -| (36.1,5.1) -| (34.1,6.1) -| (32.1,8.1) -| (31.1,9.1) -| (28.1,10.1) -| (24.1,14.1) -| (15.1, 15.9) 
-| (10.1,21.1) -| (8.1,23.1) -| (3,30) -| (-0.1,-0.1);
\draw [<-,blue,decorate,decoration={zigzag,segment length=1mm,amplitude=.2mm}] (3,30) to [out=30,in=190] (9,32) node[right]{$\nu$};

\draw (0,0) to [out=110, in=-110] node[left] {$\bl$} (0,29);
%\draw (0,0) to [out=-10, in=190] node[below] {$k+1-l$} (20,0);
\node [below=0pt] at (20,0) {$k+1-\bl$};
\draw (0,9) to [out=-10, in=-170] node[below] {$t$} (23,9);
\end{tikzpicture}
\]

Then we claim that the conditions on $\mu$ are transformed as follows:

\vspace{2mm}
\noindent{\bf Claim 1.}
\vspace{-22mm}
\begin{equation*}
	\begin{cases}
		(1)\ \mu\subset\Rbl \\ 
		(2)\ \mu/\bar\la:\text{ v.s.} \\ 
		(3)\ \mu^\circ\subset\nu \\ 
		(4)\ \nu\setminus\mu:\text{ h.s.}
	\end{cases}
	\hspace{-4mm}
	\iff 
	\hspace{-4mm}
	\begin{array}{l}
		\\
		\\
		\\
		\\
		\\
		\mu \phantom{:}= \mu((s_1,\dots,s_b),S, (x_1,\dots,x_e)) \\[4pt]
		\phantom{\mu}:= \bar\la \cup \bigcup_{1\le i\le a} X_i \\[4pt]
		\phantom{\mu := \bar\la} \cup \bigcup_{1\le i\le b} \left\{ (r_i+j,c_i) \middle| 0 \le j \le s_i \right\} \\[4pt]
		\phantom{\mu := \bar\la} \cup \{y_i\mid i\in S\}  \\[4pt]
		\phantom{\mu := \bar\la} \cup \bigcup_{1\le i\le e} \left\{ (\bar\la'_{d_i}+j,d_i) \middle| 1 \le j \le x_i \right\} \\[4pt]
		\text{
			\hspace{-4mm}
			for $\exists\left((s_1,\dots,s_b),S, (x_1,\dots,x_e)\right)$ with 
			$\begin{cases}
				-1\le s_i \le t_i, \\
				S\subset \{b+1,\ldots,a\}, \\
				0 \le x_i \le z_i.
			\end{cases}$
		}
	\end{array}
\end{equation*}

\noindent{\it Proof of Claim 1}:

\noindent$\Lra$:
Every element of $\nu\sm\bar\la$ should belong to $\nu\sm\mu$ or $\mu/\bar\la$ since $\nu\sm\bar\la \subset (\nu\sm\mu) \sqcup (\mu/\bar\la)$.

Since $\nu\setminus\mu$ is a horizontal strip, $X_i\subset\mu/\bar\la$.
Since $\mu/\bar\la$ is a vertical strip, $X'_i\subset\nu\setminus\mu$.

Take an arbitrary element $(r,c)$ of $\mu/\bar\la$.

\begin{itemize}
\item
	\underline{If $(r,c)\in\nu$}, then we have $(r,c)\in(\nu\sm\bla)\sm\Rbl$, thus $(r,c)\in\bigcup_{1\le i\le a} \{y_i\}\cup X_i$.

\item
	\underline{If $(r,c)\notin\nu$}:
	if $c>t$, then $(r,c)\in \mu\ci \subset \nu$, which is contradiction.
	Thus 
	we have $c\le t$. % since $\mu\ci\subset\nu$.
	Since $\mu/\bar\la$ is a vertical strip, 
	$\bla'_{c-1} \ge r > \bla'_{c}$.
	\begin{itemize}
		\item if $\bla'_c \ge \nu'_c$, 
			then $c\in \{d_1,\dots,d_e\}$ by definition of $d_i$.
			Thus $(r,c) = (\bla'_{d_i} + j, d_i)$ for $\exists i$, $1\le \exists j \le \bla'_{d_i-1}-\bla'_{d_i} = z_i$.
		\item if $\bla'_c < \nu'_c$, 
			then $(\bla'_c+1, c) \in \nu\sm\bla$.
			Thus $(\bla'_c+1, c) \in A_i$ for $\exists i$.
			Since $(r,c)\notin \nu$, $(r,c)\notin \bigcup_{i} A_i$.
			Thus $(r,c) = (r_i + j, c_i)$ for $\exists i$ and $1\le j \le \bla'_{c_i-1}-\bla'_{c_i} = t_i$.
	\end{itemize}
	% there is a $\bar\la$-addable corner $(\bar\la'_c+1,c)$ in the $c$-th column.
	% \begin{itemize}
		% \item
			% If $(\bar\la'_c+1,c)\in \nu\sm\bar\la$,
	% %then $c\in\{c_1,\dots,c_b\}$.
			% then $(\bar\la'_c+1,c)=y_i(=(r_i,c_i))$ for $\exists i$.
			% Thus $(r,c)=(r_i+j,c_i)$ for $0\le j\le t_i$.
		% \item
			% If $(\bar\la'_c+1,c)\notin \nu\sm\bar\la$,
			% then $c\in\{d_1,\dots,d_e\}$.
			% Thus $(r,c)=(\bar\la'_{d_i}+j,d_i)$ for $0\le j\le z_i$.
	% \end{itemize}
\end{itemize}

\noindent$\Longleftarrow$:
(1): 
clear.

(3):
since $c_1,\dots,c_b, d_1, \dots, d_e \le t$, 
we have
\[
	\mu\ci = \big(\bla \cup \bigcup_{1\le i\le a} X_i \cup \{y_i\mid i\in S\})\ci.
\]
To show (3), use (\ref{eq:same_assump5}) and 
that
\[
\alpha\ci\subset\beta, (r,c)\in\beta \Lra (\alpha\cup\{(r,c)\})\ci \subset \beta.
\]
({\it Proof}: $(\alpha\cup\{(r,c)\})\ci = \alpha, \alpha\cup\{(r,c)\}, \alpha\cup\{(r,i)\mid 1\le i \le c\}$ 
according to whether $c\le t$, $c>t+1$, $c=t+1$.)

(4):
Since $A_i$ is a ribbon, we have
(the below cell of $y_i$)$\notin X'_i$, 
whence $\nu\sm\mu\subset(\nu\sm\Rbl)\cup\bigcup X'_i\cup\{\,y_1,\dots,y_a\}$: horizontal strip.

(2): % To prove that $\mu/\bar\la$ is a vertical strip,
it suffices to show that for any $(r,c)\in\mu/\bar\la$, it holds $(r,c-1)\in\bar\la$.
\begin{itemize}
	\item 
		If $(r,c)\in X_i$, then $(r+1,c)\in A_i\subset \nu\sm\bar\la$, whence $(r,c-1)\in\bar\la$ since $\nu\sm\bar\la$ is a ribbon.
	\item
		(the left cell of $y_i$)$\in\bar\la$ is obvious by the definition of $y_i$.
	\item
		If $(r,c)=(r_i+j,c_i)$ for $1\le i \le b$ and $0\le j \le t_i$, 
		we have $r\le r_i+t_i=\bar\la'_{c_i-1}$ thus $(r,c-1)\in\bar\la$.
	\item
		If $(r,c)=(\bar\la'_{d_i}+j,d_i)$ for $1\le i \le e$ and $1\le j \le z_i$, 
		we have $r\le \bar\la'_{d_i}+z_i=\bar\la'_{d_i-1}$ thus $(r,c-1)\in\bar\la$.
\end{itemize}
\rightline{\it Claim 1 is proved.}
%Since $A_i$ is a ribbon, we have
%(right cell of $y_i$)$\notin X_i$, whence $\mu/\bar\la\subset\bigcup X_i\cup\{\,y_i\mid i\in S\,\}$ :v.s.
%$\mu/\bar\la \subset
%\bigcup X_i
%\cup \bigcup_{1\le i \le b} \{(r_i+j,c_i) \mid 1\le j \le t_i \}
%\cup \{\,y_1,\dots,y_a\,\}
%\cup \bigcup_{1\le i \le e} \{(mu'_{d_i}+j,d_i) \mid 1 \le j \le z_i \}
%$ :v.s.

\vspace{2mm}
\noindent{\bf Claim 2.}
Put $\DS X = \sum_{1\le i \le a} |X_i|$ and write $\mu_{\min}=\mu((-1,\dots,-1),\emptyset,(0,\dots,0))$.
For $\mu = \mu((s_1,\dots,s_b),S, (x_1,\dots,x_e))$,
\begin{enumerate}
	\item
	% \noindent{(1)}
		$\DS|\mu/\bla| = X + \sum_{1\le i \le b} (1+s_i) + |S| + \sum_{1 \le j \le e} x_j$.
	\item
	% \noindent{(2)}	
		$\DS|\nu\sm\mu| = |\nu\sm\bla| - X - |S| - \sum_{1\le i \le b} \DE{s_i\neq -1}$.
	\item
	% \noindent{(3)}
		$\DS r_{\mu'\bla'} = \Crml - \sum_{1\le i\le b} \DE{s_i=t_i} - \sum_{1\le j\le e} \DE{x_j=z_j} - \sum_{i\in S} \DE{\text{the left of $y_i$ is $\bla$-rem.\ cor.}}$.\\
		\quad where $\Crml = r_{\mu'_{\min},\bla'}$.
	\item
	% \noindent{(4)}
		$\DS q_{\mu\bla} = \Crml + X + \sum_{1\le i\le b} (1+s_i-\DE{s_i=t_i}) + \sum_{1\le j\le e} (x_j-\DE{x_j=z_j}) - \sum_{i\in S} (1-\DE{\text{the left of $y_i$ is $\bla$-rem.\ cor.}})$.
	\item
	% \noindent{(5)}
		$\DS \DE{\bla'_t=\mu'_{t+1}} = \Cde + \sum_{i\in S} \DE{c_i=t+1}\DEt{the left of $y_i$ is $\bla$-rem.\ cor.}$, \\
		where $\Cde = \DE{\bla'_t=(\mu_{\min})'_{t+1}}$.
	\item
	% \noindent{(6)}
		$\DS r_{\nu\mu\ci} = \Crnm + \sum_{i\in S} (1-\DEt{the left of $y_i$ is a $\nu$-nonblocked $\bla\ci$-rem.\ cor.})$,\\
		where $\Crnm = r_{\nu,\mu\ci_{\min}}$. % = r_{\nu,\bla\ci}$.
\end{enumerate}
Moreover, if $\nu_1> k+1-\bl$,
\begin{enumerate}[resume]
	\item
		\begin{align*}
			A &= \nu_{\bl-u+1} + |\nu_{\le \bl-u}\sm\mu| \\
			  &= \CA - \sum_{i\in S}\DE{r_i \le \bl - u} - \sum_{1\le i\le b} \DE{s_i\neq -1}\DE{r_i\le \bl-u},
		\end{align*}
		where $\CA = \nu_{\bl-u+1} + |\nu_{\le \bl-u} \sm \mu_{\min}|$.
\end{enumerate}

		Thus,

\begin{enumerate}[resume]
	% \noindent{(7)}
	\item
		\begin{align*}
		% \vspace{-5mm}
		f_1(\mu)
		&= q_{\mu\bla} - 1 + \DE{\bla'_t=\mu'_{t+1}} + v - |\nu\sm\mu| - |\mu/\bla| - r_{\nu\mu\ci} \\
		&= \Cf + \sum_{1\le i \le b} (1 - \DE{s_i=t_i} - \DE{s_i = -1}) 
			- \sum_{1\le j \le e} \DE{x_j=z_j} \\
		&\phantom{= C_5}	+ \sum_{i\in S} \Big( \DEt{the left of $y_i$ is a $\nu$-nonblocked $\bla\ci$-rem.\ cor.} \\[-6pt]
		&\phantom{= C_5	+ \sum_{i\in S} \Big(}	- \DEt{the left of $y_i$ is a $\bla$-rem.\ cor.} \\[-6pt]
		&\phantom{= C_5	+ \sum_{i\in S} \Big(}	+ \DE{c_i=t+1}\DEt{the left of $y_i$ is a $\bla$-rem.\ cor.} \Big),
		\end{align*}
		% \vspace{-2mm}
		where $\Cf = \Crml + X - 1 + \Cde + v - |\nu\sm\bla| - \Crnm$.
	% \noindent{(8)}
	\item
		\begin{align*}
		% \vspace{-5mm}
			f_2(\mu) &= v - |\nu\sm\mu| - |\mu/\bla| \\
				&= v - |\nu\sm\bla| - \sum_{1\le j\le e} x_j - \sum_{1\le i\le b} (s_i + \DE{s_i = -1}).
		\end{align*}
	\item
		\begin{align*}
		% \vspace{-5mm}
		g_1(\mu)
		&= q_{\mu\bla} - 1 + \DE{\bla'_t=\mu'_{t+1}} + v - |\mu/\bla| - A - r_{\nu\mu\ci} \\
		&= \Cg + \sum_{\substack{1\le i\le b \\ r_i\le\bl-u}}(1 - \DE{s_i=t_i} - \DE{s_i = -1}) \\
		&\phantom{= C_4}	- \sum_{\substack{1\le i\le b \\ r_i>\bl-u}} \DE{s_i=t_i} 
			- \sum_{1\le j\le e} \DE{x_j=z_j} \\
		&\phantom{= C_4}	+ \sum_{i\in S} \Big( \DEt{the left of $y_i$ is a $\nu$-nonblocked $\bla\ci$-rem.\ cor.} \\[-6pt]
		&\phantom{= C_4	+ \sum_{i\in S} \Big(}	- \DEt{the left of $y_i$ is a $\bla$-rem.\ cor.} \\[-6pt]
		&\phantom{= C_4	+ \sum_{i\in S} \Big(}	+ \DE{c_i=t+1}\DEt{the left of $y_i$ is a $\bla$-rem.\ cor.} \\[-6pt]
		&\phantom{= C_4	+ \sum_{i\in S} \Big(}	- \DE{r_i > \bl-u} \Big),
		\end{align*}
		% \vspace{-2mm}
		where $\Cg = \Crml - 1 + \Cde + v - \CA - \Crnm (=g_1(\mu_{\min}))$.
	% \noindent{(8)}
	\item
		\begin{align*}
		% \vspace{-5mm}
			g_2(\mu) &= v - |\mu/\bla| - A \\
				&= v - X - \CA - \sum_{1\le j \le e} x_j \\
				&\phantom{=} - \sum_{\substack{1\le i\le b \\ r_i\le\bl-u}} (s_i + \DE{s_i = -1}) - \sum_{\substack{1\le i\le b \\ r_i>\bl-u}} (1 + s_i) - \sum_{i\in S}\DE{r_i>\bl-u}.
		\end{align*}
	
\end{enumerate}

\noindent{\it Proof of Claim 2}:

It suffices to show (1)-(7)
since (8)-(11) follow from them.

\noindent\underline{(1), (2), (3), (5), (7)}: Obvious.

\noindent\underline{(4)}:
Recall $q_{\mu\bla} = |\mu/\bla| + r_{\mu'\bla'}$.

\noindent\underline{(6)}:
The value of $r_{\nu\mu\ci}$ is independent of $s_1,\dots,s_b$ and $x_1,\dots,x_e$ since
$c_1,\dots,c_b, d_1,\dots,d_e \le t$.
It suffices to show that
$$r_{\nu\mu\ci_{\wti{T}}} - r_{\nu\mu\ci_{T}} = 1 - \DEt{the left of $y_i$ is a $\nu$-nonblocked $\bla\ci$-rem.\ cor.}$$
% for $\forall i\in S$, $\forall T\subset S\sm\{i\}$ and $\wti{T} = T\cup\{i\}$.
for all $i\in S$, $T\subset S\sm\{i\}$ and $\wti{T} = T\cup\{i\}$.
Put $\ga=\mu_T$, $\beta=\mu_{\wti T}=\ga\cup\{y_i\}$. Recall $y_i=(r_i,c_i)$.

\noindent\underline{\it Case A: if $l(\ga\ci)=l(\beta\ci)$ i.e. $c_i>t+1$,}
then $\ga\ci\cup\{y_i\}=\beta\ci$,
whence
\[
r_{\nu\beta\ci}-r_{\nu\ga\ci}=
\begin{cases}
	0 & \text{(if $(r_i,c_i-1)$ is a $\nu$-nonblocked $\ga\ci$-rem.\ cor.)}, \\
	1 & \text{(otherwise)},
\end{cases}
\]
by \cite[Lemma 35]{Takigiku_part1}.
% by Lemma \ref{p-p}.
\[
	% \input{fig/same-5.tex}
%\[
\begin{tikzpicture}[scale=0.3]
\draw (0,0) -| (12,1) -| (10,3) -| (6,7) -| (0,0);
%\draw (0,7) rectangle (4,10);
\draw (4,7) |- (3,9) |- (1,10) |- (0,11) -- (0,7);
\draw (6,3) rectangle (7,5);
\draw (6,5) rectangle (7,6);
\node at (3,3) {$\bar\la\ci$};

\draw [loosely dotted, thick] (4,9) -- (4,11);

\draw (0,11) to [out=20,in=160] node[above]{$t$} (4,11);
\draw (6.5,5.5) to [out=90,in=-135] (8,8) node[anchor=south west] {$y_i=(r_i,c_i)$};
\draw (6.5,4) to [out=45,in=180] (10,7) node[right] {$X_i$};
\draw (0,0) to [out=105, in=-105] node[left]{$l(\gamma\ci)=l(\beta\ci)$} (0,7);
\end{tikzpicture}
%\]
\]
Now 
\begin{align*}
	&(r_i,c_i-1)\text{ is a $\nu$-nonblocked $\ga\ci$-rem.\ cor.} \\
	&\iff (r_i,c_i-1)\text{ is a $\nu$-nonblocked $\bla\ci$-rem.\ cor.}.
\end{align*}
({\it Proof.}
$\Lra$:
since
$(r_i,c_i-1)\in\bla\ci$,
\begin{align*}
\text{$(r_i,c_i-1)$ is a $\ga\ci$-rem.\ cor.}
\Lra \text{$(r_i,c_i-1)$ is a $\bla\ci$-rem.\ cor.}
\end{align*}

$\Lla$:
Note that$(r_i,c_i)\notin\bla\ci,\ga\ci$.
Thus 
\begin{align*}
\text{$(r_i,c_i-1)$ is not a $\ga\ci$-rem.\ cor.}
&\Lra (r_i+1,c_i-1)\in\ga\ci \\
&\Lra (r_i+1,c_i-1)\in\nu \\
&\Lra\text{$(r_i,c_i-1)$ is $\nu$-blocked}.)
\end{align*}

% and $(r_i+1,c_i-1)\in\bla\ci \Lra (r_i+1,c_i-1)\in\ga\ci \Lra (r_i+1,c_i-1)\in\nu$.
% %since $(r_1+1,c_1-1)\notin \nu/\ga\ci$ by the choice of $A_1$.

\vspace{2mm}
\noindent\underline{\it Case B: if $l(\ga\ci) + 1 = l(\beta\ci)$, i.e. $c_i=t+1$,}
then $\beta\ci = \ga\ci\cup(t+1)$,
whence
\[
r_{\nu\beta\ci}-r_{\nu\ga\ci}=1.
\]
Note that in this case $(r_i,c_i-1)$ must not be a $\nu$-nonblocked $\bla\ci$-removable corner since $(r_i,c_i-1)\notin\bla\ci$.
\[
	% \input{fig/same-6.tex}
%\[
\begin{tikzpicture}[scale=0.3]
\draw (0,0) -| (9,2) -| (7,5) -| (0,0);
%\draw (0,5) rectangle (4,10);
\draw (4,5) rectangle (5,7);
\draw (4,7) rectangle (5,8);
\node at (3.5,2.5) {$\bar\la\ci$};

\draw (4,5) |- (3,10) |- (1,11) |- (0,12) -- (0,5);
\draw[loosely dotted, thick] (4,10) -- (4,12);

\draw (0,12) to [out=20,in=160] node[above]{$t$} (4,12);
\draw (4.5,7.5) to [out=90,in=-135] (6,10) node[anchor=south west] {$y_i=(r_i,c_i)$};
\draw (4.5,6) to [out=45,in=180] (8,9) node[right] {$X_i$};
\draw [loosely dotted,thick] (0,7) -- (4,7);
\draw (0,0) to [out=105, in=-105] node[left]{$l(\gamma\ci)$} (0,7);
\draw [loosely dotted,thick] (-4,8) -- (4,8);
\draw [loosely dotted,thick] (-4,0) -- (0,0);
\draw (-4,0) to [out=105, in=-105] node[left]{$l(\beta\ci)$} (-4,8);
\end{tikzpicture}
%\]
\]

\vspace{2mm}
Hence in both cases we have
\begin{align*}
r_{\nu\beta\ci}-r_{\nu\ga\ci}
&=\de\left[(r_i,c_i-1)\text{ is not a $\nu$-nonblocked $\bla\ci$-rem.\ cor.}\right].
\end{align*}
% %\intertext{%
	% Since $(r_1,c_1-1)$ is not a $\ga\ci$-removable corner 
	% if and only if
	% ``$(r_1,c_1-1)$ is not a $\bar\la$-removable corner or $(r_1,c_1-1) = (\bar\la'_t,t)$'',
% %}
% \begin{align*}
% r_{\nu\beta\ci}-r_{\nu\ga\ci}
% &=\de\left[(r_1,c_1-1)\text{ is not a $\bar\la$-rem.\ cor.}\right] \\
% &\quad + \de\left[(r_1,c_1-1)=(\bar\la'_t,t)\right]\\
% &=\de\left[(r_1,c_1-1)\text{ is not a $\bar\la$-rem.\ cor.}\right] \\
% &\quad + \de\left[\beta'_{t+1}=\bar\la'_t\right] - \de\left[\ga'_{t+1}=\bar\la'_t\right].
% \end{align*}

\rightline{\it Claim 2 is proved.}

\vspace{2mm}
Now we get back to the calculations of $a_\nu$.

\vspace{2mm}

\noindent\underline{\it Case 1: if $\nu_1 \le k+1-\bl$,}

First we prove that if $b>0$ then $a_{\nu}=0$.

Assume $b>0$.

Fix $s_2,\dots,s_b, S, x_1,\dots,x_e$ and
consider a sum
$f(\mu)=f\left(\mu\left(\left(s_1,\dots,s_b\right),S, \left(x_1,\dots,x_e\right)\right)\right)$
of (\ref{StepB_Goal})
according to the variable $s_1$.
By Claim 2 this sum has the form
$$\DS\sum_{s_1=-1}^{t_1} (-1)^{\CsomeA+s_1} \DE{\CsomeC - s_1 - \DE{s_1=-1} \ge 0} \binom{\CsomeB-\DE{s_1=t_1} - \DE{s_1=-1}}{\CsomeC - s_1 - \DE{s_1=-1}}$$
(for some constants $\CsomeA,\CsomeB,\CsomeC$),
which is zero by Lemma \ref{binom_fold}.

Thus we conclude 
$$
a_{\nu} = \sum_{((s_2,\dots,s_b),S,(x_1,\dots,x_e))} \sum_{s_1} f(\mu((s_1,\dots,s_b),S, (x_1,\dots,x_e))) = 0
$$
if $b>0$.

\vspace{2mm}
Now we assume $b=0$.
Next we prove that if $a>0$ then $a_{\nu}=0$.
Assume $a>0$.

Let us fix $x_1,\dots,x_e$ arbitrarily and
put $\mu_S = \mu((),S,(x_1,\dots,x_e))$ % = \bar\la \cup \bigcup_{i} X_i \cup \{y_i\mid i\in S\}$
for $S\subset \{1,\ldots,a\}$.

It suffices to prove
$f(\mu_T) + f(\mu_{\wti T}) = 0$
for each $T\subset \{2,\dots,a\}$ and $\wti T=\{1\}\cup T$.

For such $T$, it suffices to show
\begin{enumerate}
	\item
		$|\mu_{\wti T}/\bar\la| = |\mu_T/\bar\la| + 1$,
	\item
		$f_1(\mu_{T})=f_1(\mu_{\wti T})$,
	\item
		$f_2(\mu_{T})=f_2(\mu_{\wti T})$.
\end{enumerate}

\noindent{\it Proof of} (1), (3): obviously follow from Claim 2.

\noindent{\it Proof of} (2):
Recall $y_1=(r_1,c_1)$.
By Claim 2, it suffices to show
\begin{align*}
	&\DEt{$(r_1,c_1-1)$ is a $\nu$-nonblocked $\bla\ci$-rem.\ cor.} \\
	& + \DE{c_1=t+1}\DEt{$(r_1,c_1-1)$ is a $\bla$-rem.\ cor.} \\
	& - \DEt{$(r_1,c_1-1)$ is a $\bla$-rem.\ cor.} \\
	&= 0.
\end{align*}

Recall that $A_1 \sqcup \dots \sqcup A_a = (\nu\sm\bla)\cap\Rbl$.
When $(r_1, c_1-1)$ is a $\bla\ci$-removable corner, 
$(r_1+1,c_1-1)\notin \nu\sm\bla$ by the choice of $A_1$ and $\nu_l \le \la_l \le t < c_1-1$.
Moreover, 
$(r_1+1, c_1-1) \notin \bla/\bla\ci$ since $c_1-1 > t$,
thus $(r_1+1,c_1-1)\notin \nu/\bla\ci$.
Hence
\begin{align*}
&\DEt{$(r_1,c_1-1)$ is a $\nu$-nonblocked $\bla\ci$-rem.\ cor.} \\
&= \DEt{$(r_1,c_1-1)$ is a $\bla\ci$-rem.\ cor.}.
\intertext{
Recalling the definition of $\bla\ci$,}
&= \DE{c_1 > t+1} \DEt{$(r_1,c_1-1)$ is a $\bla$-rem.\ cor.},
\end{align*}
which completes the proof.

\vspace{2mm}
Finally we assume $a=b=0$, namely, 
\begin{equation}\label{eq:same_assump1}
\nu\cap\Rbl \subset \bar\la.
\end{equation}

Note that $\mu_{\min} = \bla$ and $\nu\sm\bla = \nu\sm\Rbl$.
We shall abbriviate $\mu((),\emptyset,(x_1,\dots,x_e))$
as $\mu(x_1,\dots,x_e)$,
which is %the partition
$\bar\la$ with $x_i$ boxes added at $d_i$-th column for each $i$.
Note that $r_{\nu\bla\ci} = r_{\bla\bla\ci}$ since 
a $\nu$-blocked $\bla\ci$-corner can exist only if $\nu_l \ge \bla_\bl > t$,
which never happen since (\ref{eq:same_assump8}) and (\ref{eq:same_assump7}).
Then we have
\begin{align*}
	f_1(\mu(x_1,\dots,x_e)) &= C_1 + X - 1 + C_2 + v - |\nu\sm\bla| - C_3 - \sum \DE{x_j=z_j} \\
		&= r_{\bla'\bla'} - 1 + \DE{\bla'_t=\bla'_{t+1}} + v - |\nu\sm\bla| - r_{\nu\bla\ci} - \sum \DE{x_j=z_j} \\
		&= e - \sum \DE{x_j=z_j} + v - |\nu\sm\bla|.
\end{align*}
Here the last equality follows from 
	since $r_{\bla'\bla'}-r_{\nu\bla\ci} = r_{\bar\la\bar\la}-r_{\bar\la\bar\la\ci} = \#\{\text{removable corner of $\bar\la/\bar\la\ci$}\}
		= e + \de\left[\bar\la'_t > \bar\la'_{t+1}\right]$, and
\begin{align*}
	f_2(\mu(x_1,\dots,x_e)) &= v - |\nu\sm\bla| - \sum x_j.
\end{align*}

% Thus,
% \begin{align*}
	% a_{\nu}
	% &= \sum_{x_1=0}^{z_1}\dots\sum_{x_e=0}^{z_e} (-1)^{|\mu(x_1,\dots,x_e)/\bla|} \DE{f_2(\mu(x_1,\dots,x_e)) \ge 0} \binom{f_1(\mu(x_1,\dots,x_e))}{f_2(\mu(x_1,\dots,x_e))} \\
	% &= \sum_{x_1=0}^{z_1}\dots\sum_{x_e=0}^{z_e}
% \end{align*}
% and

% Then we have
% \begin{align*}
	% q_{\mu(x_1,\dots,x_e)\bar\la} &= r_{\bar\la\bar\la} + \sum_{i} x_i - \sum_{i} \de\left[x_i=z_i\right], \\
	% \de\left[\bar\la'_t=\mu(x_1,\dots,x_e)'_{t+1}\right] &= \de\left[\bar\la'_t=\bar\la'_{t+1}\right], \\
	% \nu\sm\mu(x_1,\dots,x_e) &= \nu\sm\bar\la = \nu\sm\Rbl, \\
	% |\mu(x_1,\dots,x_e)/\bar\la| &= \sum_{i=1}^{e} x_i, \\
	% r_{\nu\mu(x_1,\dots,x_e)\ci} &= r_{\nu \bar\la\ci} = r_{\bar\la\bar\la\ci}.
% \end{align*}
% Hence we have
% \begin{align*}
	% a_{\nu}
	% &= \sum_{x_1=0}^{z_1}\dots\sum_{x_e=0}^{z_e} 
		% f(\mu(x_1,\dots,x_e)) \\
	% &= \sum_{x_1=0}^{z_1} \dots\sum_{x_e=0}^{z_e}
		% (-1)^ {\sum_{i=1}^{e} x_i}
		% \de\left[b \ge \sum_{i}x_i + |\nu\sm\Rbl|\right] \\
	% &\phantom{=}\times
		% \scalebox{0.85}{$\displaystyle
		% \binom{ r_{\bar\la\bar\la} + \sum_{i} x_i - \sum_{i} \de\left[x_i=z_i\right]
			% -1
			% +\de\left[\bar\la'_t=\bar\la'_{t+1}\right] 
			% + b-|\nu\sm\Rbl| - \sum_i x_i
			% -r_{\bar\la\bar\la\ci}
		% } 
		% {b-|\nu\sm\Rbl|-\sum_i x_i}$}.
% \end{align*}

	% Besides, it can be seen that
	% $r_{\bar\la\bar\la}-r_{\bar\la\bar\la\ci} = \#\{\text{removable corner of $\bar\la/\bar\la\ci$}\}
	% = e + \de\left[\bar\la'_t > \bar\la'_{t+1}\right]$.
	Thus we have
\begin{align*}
	a_\nu
	&= \sum_{x_1,\dots,x_e} f(\mu(x_1,\dots,x_e)) \\
	&= \sum_{x_1=0}^{z_1} (-1)^{x_1} \dots\sum_{x_e=0}^{z_e} (-1)^{x_e}
	\de\left[v \ge \sum_{i=1}^{e} x_i + |\nu\sm\Rbl|\right] \\
	&\qquad \times
	\binom{e - \sum_{i=1}^{e} \de\left[x_i=z_i\right] + v - |\nu\sm\Rbl|}
	{v - \sum_{i=1}^{e} x_i - |\nu\sm\Rbl|}.
\end{align*}

Now we simplify the summation on $x_e$ using Lemma \ref{binom_fold}
(of the form $\sum_{x=0}^{z} \de[a-x\ge 0] (-1)^x \binom{q-\de[x=z]}{a-x} = \de[a\ge 0] \binom{q-1}{a}$),

\begin{align}
	a_{\nu}
	&= \sum_{x_1=0}^{z_1} (-1)^{x_1} \dots\sum_{x_{e-1}=0}^{z_{e-1}} (-1)^{x_{e-1}}
	\de\left[v \ge \sum_{i=1}^{e-1} x_i + |\nu\sm\Rbl|\right] \notag \\
	&\qquad \times
	\binom{e - 1 - \sum_{i=1}^{e-1}\de\left[x_i=z_i\right] + v - |\nu\sm\Rbl|}
	{v - \sum_{i=1}^{e-1} x_i - |\nu\sm\Rbl|}. \notag \\
\intertext{Then repeating this,}
	&= \de\left[v \ge |\nu\sm\Rbl|\right] \binom{v - |\nu\sm\Rbl|}{v - |\nu\sm\Rbl|} \notag \\
	&= \de\left[v \ge |\nu\sm\Rbl|\right]\notag \\
	&= \de\left[v \ge \nu_{\bl+1}\right]. \qquad \text{(by (\ref{eq:same_assump6}))}\notag
\end{align}
Note that 
$v \ge \nu_{\bl+1}$ can be rephrased as
\begin{equation}
\core(\la)_{\bl+1}\ge \core(\nu)_{\bl+1}. \label{eq:same_assump2}
\end{equation}

%%%%%%%%%%%%%%%%%%%%%%%%%%%%%%%%%%%%%%%%

\noindent\underline{\it Case 2: if $\nu_1>k+1-\bl$,}

	By the same argument as Case 1,
	we can see that $a_{\nu}=0$ unless
%	$\nu\cap\Rbl\subset\bar\la$.
	$\{i\mid 1\le i \le b, r_i \le \bl-u\} = \emptyset$.
	Thus we assume $(r_1 > \dots >) r_b > \bl-u$ hereafter.
	
	\vspace{2mm}
	Next we prove $a_\nu=0$ unless $b=0$.
	Assume $b>0$.
	Then $r_b>\bl-u$ and $(r_b,c_b)\in\nu\sm\bla$,
	thus $\nu_{\bl-u+1} \ge \nu_{r_b} > \bla_{r_b} \ge \bla_{\bl+1} = v$.
	Hence $g_2(\mu) \le v - \CA \le v-\nu_{\bl-u+1} < 0$
	for any $\mu=\mu((s_i)_i,S,(x_j)_j)$,
	which implies $a_\nu=0$.
	Thus we assume $b=0$ hereafter.
	
	\vspace{2mm}
	Next we prove $a_\nu=0$ unless $a=0$.
	Assume $a>0$.
	As Case 1,
	fix $x_1,\dots,x_e$ arbitrarily and
	put $\mu_S = \mu((),S,(x_1,\dots,x_e))$ 
	for $S\subset \{1,\ldots,a\}$.

	It suffices to prove
	$g(\mu_T) + g(\mu_{\wti T}) = 0$
	for each $T\subset \{2,\dots,a\}$ and $\wti T=\{1\}\cup T$.

	\begin{itemize}
		\item
		If $r_{1}>\bl-u$, then $l(\nu\ci)\ge r_{1}>\bl-u$ i.e.\ $\nu_{\bl-u+1}>t$, 
		and thus $g(\mu_S)=0$ for all $S$ since $g_2(\mu_S) \le v - A \le v - \nu_{\bl-u+1} < 0$.

		\item
		If $r_{1} \le \bl-u$, 
		we can deduce
			$|\mu_{\wti T}/\bar\la| = |\mu_T/\bar\la| + 1$,
			$g_1(\mu_{T})=g_1(\mu_{\wti T})$ and
			$g_2(\mu_{T})=g_2(\mu_{\wti T})$ 
		by the same proof as Case 1.
		% To prove
			% $g_1(\mu_{T})=g_1(\mu_{\wti T})$,
		% the same proof as Case 1 works as long as
		% $(r_{b+1}+1,c_{b+1}-1)\notin\nu/\bla\ci$.

		% However, if $(r_{b+1}+1,c_{b+1}-1)\in\nu/\bla\ci$
		% then $(r_{b+1}+1,c_{b+1}-1)$ must be an element of $A_{b}$, thus $c_{b+1}-1 \le t$ by the choice of $b$.
		% Then $c_{b+1} = t+1$ and the equation
		% \begin{align*}
			% &\DEt{$(r_{b+1},c_{b+1}-1)$ is a $\nu$-nonblocked $\bla\ci$-rem.\ cor.} \\
	% %		&= \DEt{$(r_1,c_1-1)$ is a $\bla\ci$-rem.\ cor.}
	% %		\intertext{since $(r_1+1,c_1-1)\notin \nu/\bla\ci$ by the choice of $A_1$. Recalling the definition of $\bla\ci$,}
			% &= \DE{c_{b+1} > t+1} \DEt{$(r_{b+1},c_{b+1}-1)$ is a $\bla$-rem.\ cor.}.
		% \end{align*}
		% holds in the end, thus the same proof works.
	\end{itemize}

	\vspace{2mm}
%	Finally we assume $a=b=0$.
	Finally we assume $a=b=0$, namely, 
	\begin{equation}\label{eq:same_assump4}
	\nu\cap\Rbl \subset \bar\la.
	\end{equation}
%	and
%	$r_b > \bl-u$.
	% $\{i\mid 1\le i \le b, r_i \le \bl-u\} = \emptyset$.
	
	% namely,
	% \begin{equation}\label{eq:same_assump1}
	% \nu\cap\Rbl \subset \bar\la.
	% \end{equation}

	As Case 1, $\mu_{\min} = \bla$ and $\nu\sm\bla = \nu\sm\Rbl$.
%	We shall abbriviate $\mu((),\emptyset,(x_1,\dots,x_e))$
	We use the same notation $\mu(x_1,\dots,x_e)$ as Case 1,
%	which is %the partition
%	$\bar\la$ with $x_j$ boxes added at $d_j$-th column for each $j$.
%	and $s_i$ boxes added at $c_i$-th column for each $i$.
	%
	then we have
	\begin{align*}
		g_1(\mu(s_1,\dots,s_b,x_1,\dots,x_e)) 
			&= C_1 - 1 + C_2 + v - C_3 - C_4 
			- \sum_{j} \DE{x_j=z_j} \\
			&= e + v - \CA - \sum_{j} \DE{x_j=z_j} 
%			&\phantom{=}- |\nu_{\le \bl-u}\sm\mu_{\min}| - r_{\nu,{\mu_{\min}}\ci} - \sum_{j} \DE{x_j=z_j} - \sum_{i} \DE{s_i=t_i} \\
%			&= \text{\bf \huge Case 1 no you ni kirei ni naruka??}
	\end{align*}
		%\intertext{
%		since $r_{\bar\la\bar\la}-r_{\bar\la\bar\la\ci} = \#\{\text{removable corner of $\bar\la/\bar\la\ci$}\}
%			= e + \de\left[\bar\la'_t > \bar\la'_{t+1}\right]$, 
		and
		%}
	\begin{align*}
		g_2(\mu(x_1,\dots,x_e)) &= v - \CA - \sum_{j} x_j
	\end{align*}
	by the same argument as Case 1.
	Thus, similarly to Case 1, we have
	\begin{align*}
		a_\nu
		&= \sum_{x_1,\dots,x_e} g(\mu(x_1,\dots,x_e)) \\
		&= 
		\sum_{x_1=0}^{z_1} (-1)^{x_1} \dots\sum_{x_e=0}^{z_e} (-1)^{x_e}
		%\sum_{s_1=-1}^{t_1} (-1)^{s_1} \dots\sum_{s_b=-1}^{t_b} (-1)^{s_b}
		\DE{v - \CA - \sum_{j=1}^{e} x_j \ge 0} \\
		&\phantom{=}\qquad \times
		\binom{e + v - \CA - \sum_{j=1}^{e} \DE{x_j=z_j}}
		{v - \CA - \sum_{j=1}^{e} x_j} \\
		&= \DE{v - \CA \ge 0} \binom{v - \CA}{v - \CA} \\
		&= \DE{v - \CA \ge 0}.
	\end{align*}
	Note that 
	$\bla_1 = k+1-\bl$ since $\nu\cap\Rbl\subset\bar\la$ and $\nu_1>k+1-\bl$,
	thus
	$\CA = \nu_{\bl-u+1}+|\nu_{\le \bl-u}\sm\bla| = \nu_{\bl-u+1} + \nu_{1} - (k+1-\bl)$.
	Hence 
	\begin{align}
		v - \CA \ge 0
		&\iff v + (k+1-\bl) \ge \nu_1 + \nu_{\bl+1-u} \notag\\
		&\iff \core(\la)_1 \ge \core(\nu)_1. \label{eq:same_assump3}
	\end{align}
	% can be rephrased as
	% \begin{equation}
		% \core(\la)_{\bl+1}\ge \core(\nu)_{\bl+1}. \label{eq:same_assump2}
	% \end{equation}	
	
%	{\huge \bf kirei ni naruka??}

%	{\huge \bf variable $b$ is ambiguous!!!}

		% Notice that 
	% \begin{align*}
		% b-|\mu/\bar\la|-A
		% &= b-|\mu/\bar\la| - |\nu_{\le \bl-u}\sm\mu| - \nu_{\bl-u+1} \\
		% &= b-\sum_{i} x_i - (\nu_1-(k+1-\bl)) - \nu_{\bl-u+1}
	% \end{align*}
	% for $\mu=\mu(x_1,\dots,x_e)$,
	% using the same notation as in Case 1.

	% Hence, by the same argument as Case 1 we have
% \begin{align*}
	% a_{\nu}&=
	% \de\left[b \ge \nu_{\bl-u+1} + \nu_1-(k+1-\bl)\right].
% \end{align*}

% Moreover, 
% since $\nu\cap\Rbl\subset\bar\la$ and $\nu_1>k+1-\bl$, we have $\bar\la_1=k+1-\bl$ and
% \begin{align}
	% b\ge \nu_{\bl+1-u} + (\nu_1-(k+1-\bl))
	% &\iff b + (k+1-\bl) \ge \nu_1 + \nu_{\bl+1-u} \notag\\
	% &\iff \core(\la)_1 \ge \core(\nu)_1. \label{eq:same_assump3}
% \end{align}

\vspace{2mm}
To summarize the results,
$a_\nu=1$
if
\begin{itemize}
%	\begin{cases}
%	\item[(1)]
%		$l(\nu)\le l+1$,
	\item[(1)]
		$\nu \subset (k)\cup\Rbl$ (from (\ref{eq:same_assump6})),
	\item[(2)]
		$\bar\la\ci \subset \nu$ (from (\ref{eq:same_assump5})),
	\item[(3)]
		$\nu\cap\Rbl\subset\bar\la$ (from (\ref{eq:same_assump1}) in Case 1 and (\ref{eq:same_assump4}) in Case 2),
	\item[(4)]
		(when $\nu_1 \le k+1-\bl$)
		% $b \ge \nu_{\bl+1}$,
		% which can be rephrased as 
		$\core(\la)_{\bl+1}\ge \core(\nu)_{\bl+1}$ (from (\ref{eq:same_assump2})). \\
		(when $\nu_1 > k+1-\bl$)
		$\core(\la)_1 \ge \core(\nu)_1$ (from (\ref{eq:same_assump3})).
%	\end{cases}
\end{itemize}
and $a_{\nu}=0$ otherwise.
Note that the assumptions (\ref{eq:same_assump9}) and (\ref{eq:same_assump7}) can
be leaded by (1)-(4).

Now we have
$\nu_i \le \bar\la_i$ for $2\le i \le \bl$ since (1) and (3).
Besides,
$\nu_i=\core(\nu)_i$ and $\bar\la_i=\core(\bar\la)_i$ for $2\le i$
since $\nu,\bar\la\subset (k)\cup \Rbl$.

In addition,
(4) can be replaced by the condition
$\core(\la)_i \ge \core(\nu)_i$ for $i=1,\bl+1$:
actually,
%In fact, 
%the condition $\core(\la)_{l+1}\ge \core(\nu)_{l+1}$ 
(3)
implies the condition
$\core(\la)_1 \ge \core(\nu)_1$
when $\nu_1 \le k+1-\bl$,
and
the condition $\core(\la)_1 \ge \core(\nu)_1$ implies the condition
$\core(\la)_{\bl+1}\ge \core(\nu)_{\bl+1}$
when $\nu_1 > k+1-\bl$.

Therefore ``(1),(3), and (4)'' implies 
$\core(\nu)\subset\core(\la)$,
and it is easy to see that the converse is also true.

Moreover,
(2) can be rephrased as 
$\bar\la\ci\subset\core(\nu)$
under the condition $\core(\nu) \subset \core(\la)$,
since
$\core(\nu)\subset\core(\la)$ implies $\core(\nu)_i = \nu_i$ for $i \ge 2$
and thus
$\nu\neq\core(\nu)$ 
occurs only if $\nu_1\ge k+1-t (\ge \la_1)$.

% since 
% it can be easily checked that
% $\bar\la_1 \le \core(\nu)_1$ implies $\bar\la_1 \le \nu_1$

Hence we have
\begin{align*}
	(1),(2),(3),(4) 
	&\iff
	\begin{cases}
		\bar\la\ci \subset \nu \\
		\core(\nu) \subset \core(\la)
	\end{cases}\\
	&\iff
	\bar\la\ci \subset \core(\nu) \subset \core(\la).
\end{align*}

Now $\bar\la\ci=\la\ci=\core(\la\ci)$
since we have assumed $v \le t$,
thus we conclude
\[
	a_\nu =
	\begin{cases}
		1 & \text{(if $\core(\la \ci)\subset\core(\nu)\subset\core(\la)$)}, \\
		0 & \text{(otherwise)}.
	\end{cases}
\]
%which is consistent with Theorem \ref{P_mu_r}
%(which is considered as the case $t<b\le\bar\la_\bl$).

{\it Now we have completed the proof of Theorem \ref{samek_goal}.}

%\end{proof}

\appendix

\begin{bibdiv}
\begin{biblist}
\bib{Lam08}{article}{ % 1
   author={Lam, Thomas},
   title={Schubert polynomials for the affine Grassmannian},
   journal={J. Amer. Math. Soc.},
   volume={21},
   date={2008},
   number={1},
   pages={259--281},
}
\bib{MR1950481}{article}{
   author={Lapointe, L.},
   author={Lascoux, A.},
   author={Morse, J.},
   title={Tableau atoms and a new Macdonald positivity conjecture},
   journal={Duke Math. J.},
   volume={116},
   date={2003},
   number={1},
   pages={103--146},
   % issn={0012-7094},
   % review={\MR{1950481}},
   % doi={10.1215/S0012-7094-03-11614-2},
}

\bib{MR1851953}{article}{%2
   author={Lascoux, Alain},
   title={Ordering the affine symmetric group},
   conference={
      title={Algebraic combinatorics and applications (G\"o\ss weinstein,
      1999)},
   },
   book={
      publisher={Springer, Berlin},
   },
   date={2001},
   pages={219--231},
%   review={\MR{1851953}},
}
\bib{MR3379711}{collection}{ % 3
   author={Lam, Thomas},
   author={Lapointe, Luc},
   author={Morse, Jennifer},
   author={Schilling, Anne},
   author={Shimozono, Mark},
   author={Zabrocki, Mike},
   title={$k$-Schur functions and affine Schubert calculus},
   series={Fields Institute Monographs},
   volume={33},
   publisher={Springer, New York; Fields Institute for Research in
   Mathematical Sciences, Toronto, ON},
   date={2014},
   pages={viii+219},
%   isbn={978-1-4939-0681-9},
%   isbn={978-1-4939-0682-6},
%   review={\MR{3379711}},
}
\bib{MR2741963}{article}{ % 4
   author={Lam, Thomas},
   author={Lapointe, Luc},
   author={Morse, Jennifer},
   author={Shimozono, Mark},
   title={Affine insertion and Pieri rules for the affine Grassmannian},
   journal={Mem. Amer. Math. Soc.},
   volume={208},
   date={2010},
   number={977},
   pages={xii+82},
%   issn={0065-9266},
   isbn={978-0-8218-4658-2},
%   review={\MR{2741963 (2012f:05314)}},
%   doi={10.1090/S0065-9266-10-00576-4},
}
\bib{MR2079931}{article}{ % 5
   author={Lapointe, L.},
   author={Morse, J.},
   title={Order ideals in weak subposets of Young's lattice and associated
   unimodality conjectures},
   journal={Ann. Comb.},
   volume={8},
   date={2004},
   number={2},
   pages={197--219},
%   issn={0218-0006},
%   review={\MR{2079931 (2005i:06002)}},
%   doi={10.1007/s00026-004-0215-5},
}
\bib{MR2167475}{article}{ % 6
   author={Lapointe, Luc},
   author={Morse, Jennifer},
   title={Tableaux on $k+1$-cores, reduced words for affine permutations,
   and $k$-Schur expansions},
   journal={J. Combin. Theory Ser. A},
   volume={112},
   date={2005},
   number={1},
   pages={44--81},
%   issn={0097-3165},
%   review={\MR{2167475 (2006j:05214)}},
%   doi={10.1016/j.jcta.2005.01.003},
}
\bib{MR2331242}{article}{ % 7
   author={Lapointe, Luc},
   author={Morse, Jennifer},
   title={A $k$-tableau characterization of $k$-Schur functions},
   journal={Adv. Math.},
   volume={213},
   date={2007},
   number={1},
   pages={183--204},
 %  issn={0001-8708},
  % review={\MR{2331242 (2008c:05187)}},
   %doi={10.1016/j.aim.2006.12.005},
}
\bib{MR2923177}{article}{
   author={Lam, Thomas},
   author={Shimozono, Mark},
   title={From quantum Schubert polynomials to $k$-Schur functions via the
   Toda lattice},
   journal={Math. Res. Lett.},
   volume={19},
   date={2012},
   number={1},
   pages={81--93},
   % issn={1073-2780},
   % review={\MR{2923177}},
   % doi={10.4310/MRL.2012.v19.n1.a7},
}
\bib{MR2660675}{article}{ % 8
   author={Lam, Thomas},
   author={Schilling, Anne},
   author={Shimozono, Mark},
   title={$K$-theory Schubert calculus of the affine Grassmannian},
   journal={Compos. Math.},
   volume={146},
   date={2010},
   number={4},
   pages={811--852},
%   issn={0010-437X},
%   review={\MR{2660675 (2011h:14078)}},
%   doi={10.1112/S0010437X09004539},
}
\bib{MR1354144}{book}{
   author={Macdonald, I. G.},
   title={Symmetric functions and Hall polynomials},
   series={Oxford Mathematical Monographs},
   edition={2},
 %  note={With contributions by A. Zelevinsky;
 %  Oxford Science Publications},
   publisher={The Clarendon Press, Oxford University Press, New York},
   date={1995},
 %  pages={x+475},
 %  isbn={0-19-853489-2},
 %  review={\MR{1354144}},
}
\bib{Morse12}{article}{ % 9
   author={Morse, Jennifer},
   title={Combinatorics of the $K$-theory of affine Grassmannians},
   journal={Adv. Math.},
   volume={229},
   date={2012},
   number={5},
   pages={2950--2984},
%   issn={0001-8708},
%   review={\MR{2889152}},
%   doi={10.1016/j.aim.2011.11.003},
}
\bib{Takigiku_part1}{article}{ % 9
   author={Takigiku, Motoki},
   title={Factorization formulas of $K$-$k$-Schur functions I},
}
\bib{MasterThesis}{article}{
   author={Takigiku, Motoki},
   title={On some factorization formulas of $K$-$k$-Schur functions},
   journal={Master's thesis at University of Tokyo},
}
\end{biblist}
\end{bibdiv}

\end{document}